\pgfplotsset{compat=newest}
\def\ps@first{%
   \let\@oddhead\@empty
   \let\@evenhead\@empty
   \def\@oddfoot{}
   \let\@evenfoot\@oddfoot
}
\newtheorem{theorem}{Theorem}[section]
\newtheorem{cor}{Corollary}[section]
\numberwithin{equation}{section}
\newtheorem{rem}{Remark}[section]
\def\eu{e_{\mathbf{u}}}
\def\eV{e_{\mathbf{V}}}
\def\eQ{e_Q}
\def\ep{e_p}
\def\eq{e_q}
\def\Av{\mathbf{V}}
\def\ub{\bm{u}}
\def\vb{\bm{v}}
\numberwithin{equation}{section}
\date{}
\begin{document}
	\begin{frontmatter}
		
		\title{Long-time stable SAV-BDF2 numerical schemes for the forced Navier-Stokes equations}
		
		%
		

		\author[a]{Daozhi Han
		}
		\ead{daozhiha@buffalo.edu}

		\author [b]{Xiaoming Wang}
		\ead{wxm@eitech.edu.cn}

		\address[a]{Department of Mathematics, State University of New York at Buffalo, Buffalo, NY 14260}
		\address[b]{School of Mathematical Science, Eastern Institute of Technology, Ningbo, Zhejiang 315200, China}


		\begin{abstract}
		We propose a novel second-order accurate, long-time unconditionally stable time-marching scheme  for the forced Navier-Stokes equations. A new Forced Scalar Auxiliary Variable approach (FSAV) is introduced to preserve the underlying dissipative structure of the forced system that yields a uniform-in-time estimate of the numerical solution. In addition, the numerical scheme is autonomous if the underlying model is, laying the foundation for studying long-time dynamics of the numerical solution via dynamical system approach. As an example we apply the new algorithm to the two-dimensional incompressible Navier-Stokes equations. In the case with no-penetration and free-slip boundary condition on a simply connected domain, we are also able to derive a uniform-in-time estimate of the vorticity in $H^1$ norm in addition to the $L^2$ norm guaranteed by the general framework. Numerical results demonstrate superior performance of the new algorithm in terms of accuracy, efficiency, stability and robustness.
		
		\end{abstract}
		
		\begin{keyword}
		 forced autonomous dynamical systems \sep Navier-Stokes equations \sep SAV \sep BDF2 \sep  long-time stability 
			\MSC[2010]  35K61
			\sep 76T99\sep  76S05\sep  76D07.
		\end{keyword}
		
	\end{frontmatter}
	
	\section{Introduction} \label{intro-sec}
Consider  the incompressible Navier-Stokes equations (NSE):
	\begin{align}
		\frac{\partial \ub}{\partial t}+\ub\cdot \nabla \ub-\frac{1}{Re} \Delta \ub=-\nabla p+\mathbf{F}, \quad \nabla \cdot \ub=0, \label{NSe}
	\end{align}
	equipped with either the no-slip, no-penetration boundary condition, or the no-penetration plus free slip boundary condition, or the periodic boundary condition. Throughout it is assumed that the $L^2$ norm of the body force $\mathbf{F}$ is uniformly bounded in time. Under these assumptions it is known that  in 2D $\ub$ is uniformly bounded-in-time in the $L^2$ norm (energy) and in the $H^1$ norm (enstrophy) for $H^1$ initial data, while in 3D the $L^2$ norm is bounded uniformly in time. The aim of this article is to develop efficient high-order accurate and long-time stable schemes that preserve the uniform-in-time estimates of $\ub$. 
	
The NSE is known for displaying chaotic and turbulent behavior \cite{Frisch1995, Temam1997, FMRT2001, MaWa2006a, MoYa2007} in the forced case at large Renolds number.  The long time dynamics could be highly non-trivial exhibiting intricate intermittent behavior even at relatively small Reynolds numbers with a simple Kolmogorov forcing \cite{Armbruster1996}. In the 2D case there exists a global attractor and invariant measures to the NSE, and it is understood that the long-time dynamics characterized by the global attractor and invariant measures  are critical to the understanding of chaos and turbulence. To numerically capture the long-time dynamics in terms of convergence of attractors and invariant measures, it is highly desirable that the numerical schemes are efficient, high-order accurate and  appropriately preserve the dissipativity of the NSE (hence long-time stable). Indeed, it is shown in \cite{Wang2010a}
that dissipativity-preserving is key for a first-order in time scheme to capture the long-time statistics of a dissipative dynamical system in the sense of convergence of the underlying invariant measures.

Early works  focus on fully implicit discretizations of the NSE which naturally preserve the uniform-in-time estimate  of the continuous solution in the energy norm in both 2D and 3D \cite{HeRa1990, ToWi2006, Tone2007}. Global in time error estimate is generally not expected due to the chaotic and turbulent behavior of the NSE. Under the assumption that the continuous solution of the NSE is exponentially stable, global-in-time error estimates of the Crank-Nicolson scheme were established in \cite{HeRa1990} where the discrete solution  was shown to be uniformly bounded in time in the $H^1$ norm, and in the $H^2$ norm with a time-step  constraint. Under time-step constraints the authors in \cite{ToWi2006, Tone2007} proved long-time stability in the $H^1$ norm for the Backward Euler scheme and the Crank-Nicolson scheme, respectively. Another popular family of time-marching schemes for the NSE is the implicit-explicit discretization scheme (IMEX) where the nonlinear advection term is treated semi-explicitly and the viscous term is discretized implicitly, so as to achieve unconditional long-time stability while maintaining reasonable efficiency for long-time simulations.  A general class of IMEX schemes including the fractional-step methods were considered in \cite{SiAr1994} and the large-time stability in the energy norm was shown.  For the 2D NSE in the vorticity-velocity formulation the authors in \cite{HOR2017} obtained the unconditional uniform-in-time estimate in the $H^1$ norm for the IMEX BDF2 scheme.  Recently such an estimate has also been proven for the IMEX BDF2 scheme applied directly to the 2D NSE in the primitive form, albeit with a mild time-step constraint independent of the spatial resolution \cite{ReTo2023}. See also \cite{HiSu2000} for a study on the same scheme, but in the fully discretized setting.
We would like to point out that the semi-implicit treatment of the advection term utilized in the IMEX schemes mentioned above leads to the need to solve a non-symmetric, non-constant coefficient linear problem at each time step.

From efficiency standpoint it is advantageous to discretize the nonlinear advection term fully explicitly so that the same coefficient matrix is shared throughout the time-marching for long-time simulations. For the 2D NSE in the vorticity-streamfunction formulation equipped with periodic boundary condition, long-time stability in the $L^2$ and $H^1$ norms and convergence of attractors and invariant measures at vanishing time step were proven for the first-order IMEX scheme in \cite{GTWW2012}. Subsequently the analysis was carried out for the IMEX-BDF2 scheme in \cite{Wang2012} where the author further established the uniform in time $H^2$ estimate as well as convergence of the marginal distributions of the invariant measures of the scheme to those of the NSE at vanishing time-step. Note that these results are restricted to the periodic boundary case and subject to a time-step constraint of the form $k \leq C Re^{-\alpha}, \alpha\ge 1$ independent of spatial resolution. It can be restrictive for large Reynolds number for capturing long time dynamics.

In recent years a Lagrange multiplier type approach, termed as the Scalar Auxiliary Variable method (SAV), has been developed for solving PDEs with a gradient flow structure \cite{SXY2018, SXY2019}, see also \cite{GuTi2013, YZW2017, YaJu2017, YaHa2017} for early variants of this method. High order SAV methods are constructed in \cite{ALL2019, GZW2020, HuSh2021, HuSh2022, WHS2022}. More recently, the SAV approach has been extended to treat the NSE and related fluid models, see for instance \cite{LYD2019, LiSh2020, LSL2021, JiYa2021, CHJ2023} among many others. 
In particular, the advective term of the NSE is treated explicitly in a SAV method and a prefactor (a scalar variable) is discretized implicitly. As a result, the SAV methods lead to the same Stokes solver across the time iterations. In the absence of an external forcing, it was shown that the SAV methods for the NSE are unconditionally stable in the sense that a (modified) energy is non-increasing, see \cite{LSL2021} and the references cited above. In the case of linear advection diffusion equation with constant coefficient, the stability of SAV method could be attributed to the reduction of the effective Pecl\'et number.  For fixed final time and with an external forcing, the SAV methods designed in \cite{LiSh2020, Yang2021} are also unconditionally stable. To our knowledge there are no existing SAV methods for the NSE  that can preserve the uniform in time estimate of the velocity in the energy norm, and long-time estimate in the $H^1$ norm in the 2D case in the presence of external forcing.

In this article we propose a novel SAV method based on the second-order backward differentiation formula (SAV-BDF2) for the NSE. Unlike previous SAV methods, our equation for the auxiliary scalar variable is damped, forced, and autonomous. For external forcing bounded uniformly in time, long-time stability bound in the $L^2$ norm is established for the new SAV-BDF2 methods in both 2D and 3D. Furthermore,   in the case of the 2D NSE in the vorticity-streamfunction formulation, it is shown that  the discrete enstrophy  and the $H^1$ norm of the vorticity are all bounded uniformly in time. When the external forcing is independent of time (autonomous), these uniform-in-time estimate, as well as the autonomous treatment of the auxiliary variable, may imply the existence of discrete global attractors to the dynamical system on the product space. Convergence of global attractors and invariant measures will be pursued in a separate work. It should be emphasized that the SAV-BDF2 methods render the same Brinkman equations or the Helmholtz equation in the 2D case at each time step. Therefore it is highly efficient for long-time computation. As far as we know the novel uniform-in-time SAV-BDF2 methods are so far the only methods that treat the nonlinear advection term of the NSE explicitly while preserving the uniform in time estimates without any time-step restriction in the presence of external forcing.

The rest of the article is organized as follows. In Sec. \ref{sec(2)} the general scheme and the scheme for 2D NSE in streamline formulation are introduced, and the long-time stability of the schemes are established. Error analysis for the general scheme in 2D is performed in Sec. \ref{sec(3)}. Numerical results related to accuracy, stability and robustness of the proposed methods are reported in Sec. \ref{s-nu}.  Some concluding remarks are provided in Sec. \ref{conc}.

	\section{The numerical scheme}\label{sec(2)}
\subsection{The general scheme}	One first introduces a scalar auxiliary variable (SAV) $q(t)$ such that
	\begin{align}
   \frac{dq}{dt}+\gamma q -\int_{\Omega} (\ub \cdot \nabla) \ub \cdot  \ub\, dx=\gamma, \quad q(0)=1, \label{SAVe}
	\end{align}
where $\gamma>0$ is a user-specified free parameter. It is  noted that $q(t)\equiv1$. One  then writes the NSe \eqref{NSe} as an expanded system
\begin{align}
    &\frac{\partial \ub}{\partial t}+q(t) \ub\cdot \nabla \ub-\frac{1}{Re} \Delta \ub=-\nabla p+\mathbf{F}, \quad \nabla \cdot \ub=0, \label{E-NSe1} \\
	&\frac{dq}{dt}+\gamma q -\int_{\Omega} (\ub \cdot \nabla) \ub \cdot  \ub\, dx=\gamma. \label{E-NSe2}
\end{align}

	Let $k>0$ be the time step size, $t^n=n k$ for an integer $n$, and denote the numerical approximation of $\ub$ at $t^n$ by $\ub^n$. The second order backward differentiation (BDF2) approximation of the derivative is then defined as
	\begin{align}
	\delta_t \ub^{n+1}:=\frac{3\ub^{n+1}-4\ub^n+\ub^{n-1}}{2k}. \label{BDF2-de}	
	\end{align}	
 Furthermore, the extrapolated approximation is denoted by $\overline{\ub}^{n+1}:= 2\ub^n-\ub^{n-1}$. 
 Finally,  the generalized $G$ matrix is defined as
\begin{align}
	G:=\frac{1}{4}\begin{bmatrix}
1 & -2\\
-2 & 5
	\end{bmatrix},
\end{align}
	and the associated (generalized) $G$-norm is defined as $||\mathbf{V}||_G:=\mathbf{V}\cdot G\mathbf{V}$ where $\mathbf{V}$ has two entries of either vectors or scalars. Recall the equivalency between the $G$-norms and standard norms
	\begin{align}
		&c_l ||\mathbf{V}||_G^2 \leq || \mathbf{V}|| \leq c_u || \mathbf{V}||_G^2, \label{equi1}\\
		&c_l |Q|_G^2 \leq |Q|^2 \leq c_u |Q|_G^2, \label{equi2}
	\end{align}
where $c_l, c_u$ are positive constants. 

We also assume that the boundary conditions ensure the validity of the Poincar\'e 's inequality, i.e., there exists $c_0$ such that $||\nabla\ub||^2\geq c_0||\ub||^2$.
	
Applying the modified BDF2 algorithm to the system \eqref{E-NSe1}--\eqref{E-NSe2}, 
one derives the BDF2-SAV scheme as follows
\begin{align}
	&\delta_t \ub^{n+1} +\frac{1}{Re} \Delta   \ub^{n+1}+q^{n+1} \big(\overline{\ub}^{n+1}\cdot \nabla \big) \overline{\ub}^{n+1} +\nabla p^{n+1}=\mathbf{F}^{n+1}, \quad \nabla \cdot \ub^{n+1}=0, \label{DDSe-NS}\\
	&\delta_t q^{n+1}+\gamma q^{n+1}-\int_\Omega  \big(\overline{\ub}^{n+1}\cdot \nabla \big) \overline{\ub}^{n+1} \cdot  \ub^{n+1}\, dx=\gamma. \label{DSAV-NS}
\end{align}	

Before establishing the long-time stability of the algorithm, one comments on how to solve the linear system. In light of the linearity one performs a linear superposition by setting
\begin{align*}
	\ub^{n+1}=\ub_1 + q^{n+1} \ub_2, \quad p^{n+1}=p_1+ q^{n+1} p_2,
\end{align*}
	where $(\ub_i, p_i)$ satisfies the Brinkman-type equations
	\begin{align*}
		\frac{3 \ub_i}{2k}+ \frac{1}{Re}\Delta \ub_i+ \nabla p_i=\mathbf{f}_i, \quad \nabla \cdot \ub_i=0, 
	\end{align*}
with
\begin{align}
	& \mathbf{f}_1:=\mathbf{F}^{n+1}+	\frac{4 \ub^n-\ub^{n-1}}{2k}; \quad \mathbf{f}_2:=-\big(2\ub^{n}-\ub^{n-1}\big)\cdot \nabla \big(2\ub^{n}-\ub^{n-1}\big).
\end{align}
Once $\ub_1$ and $\ub_2$ are available, one updates $q^{n+1}$ from the linear scalar Eq. \eqref{DSAV-NS}. The scheme is highly efficient because it is linear and the positive coefficient matrix does not change in time.

\begin{theorem}\label{theo1}
Suppose $\ub_0 \in \mathbf{H}^1_0(\Omega), \mathbf{F} \in \mathbf{L}^\infty\big(0,\infty; \mathbf{L}^2(\Omega)\big)$ and the scheme \eqref{DDSe-NS}--\eqref{DSAV-NS} is initiated by the first order counterpart. Then for any $k >0$  the scheme \eqref{DDSe-NS}	--\eqref{DSAV-NS} is long-time stable in the sense that $||\ub^n||+|q^n|\leq C$ for any $n\geq 2$ where $C$ is a constant dependent on $||\ub_0||_{H^1}$ and $||\mathbf{F}||_{L^\infty(0,\infty;L^2(\Omega))}$.	
\end{theorem}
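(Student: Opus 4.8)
The plan is to establish a discrete energy estimate by testing the momentum equation \eqref{DDSe-NS} against $\ub^{n+1}$ and coupling it with the scalar equation \eqref{DSAV-NS}. First I would take the $L^2$ inner product of \eqref{DDSe-NS} with $2k\,\ub^{n+1}$. The pressure and divergence-free terms drop out by integration by parts using $\nabla\cdot\ub^{n+1}=0$ together with the boundary conditions. The viscous term yields $\frac{2k}{Re}\|\nabla\ub^{n+1}\|^2$ after integration by parts. The crucial observation is that the nonlinear term becomes $2k\,q^{n+1}\int_\Omega(\overline{\ub}^{n+1}\cdot\nabla)\overline{\ub}^{n+1}\cdot\ub^{n+1}\,dx$, which is exactly the quantity appearing in \eqref{DSAV-NS}. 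Multiplying \eqref{DSAV-NS} by $2k\,q^{n+1}$ produces the same integral with the opposite sign, so adding the two identities causes the troublesome advection term to cancel completely. This is the design principle of the FSAV approach and is what makes the explicit treatment of the nonlinearity stable.

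Next I would handle the BDF2 difference terms using the $G$-norm machinery. The standard BDF2 identity gives, for any sequence, the relation
\begin{align*}
2k\,\delta_t\ub^{n+1}\cdot\ub^{n+1}=\left\|\begin{bmatrix}\ub^{n+1}\\ \ub^n\end{bmatrix}\right\|_G^2-\left\|\begin{bmatrix}\ub^n\\ \ub^{n-1}\end{bmatrix}\right\|_G^2+\left\|\ub^{n+1}-2\ub^n+\ub^{n-1}\right\|_G^2,
\end{align*}
and an analogous identity holds for the scalar variable $q$. After cancelling the nonlinear term, the combined estimate takes the form of a telescoping inequality in the $G$-norm of the pair $(\ub^{n+1},\ub^n)$ and $(q^{n+1},q^n)$, plus the nonnegative second-difference terms which I would simply discard, plus the forcing contribution $2k\,\gamma\,q^{n+1}$ from the right-hand side of \eqref{DSAV-NS} and $2k\int_\Omega\mathbf{F}^{n+1}\cdot\ub^{n+1}\,dx$ from the momentum equation. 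The damping terms $\frac{2k}{Re}\|\nabla\ub^{n+1}\|^2$ and $2k\gamma|q^{n+1}|^2$ on the left are the source of dissipativity.

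The key to long-time (as opposed to finite-time) stability is to convert this into a genuine contraction. Using Poincar\'e's inequality $\|\nabla\ub^{n+1}\|^2\ge c_0\|\ub^{n+1}\|^2$, the viscous term controls $\frac{2k\,c_0}{Re}\|\ub^{n+1}\|^2$. I would absorb the forcing terms with Cauchy--Schwarz and Young's inequality, bounding $2k\int_\Omega\mathbf{F}^{n+1}\cdot\ub^{n+1}\,dx\le \frac{k\,c_0}{Re}\|\ub^{n+1}\|^2+\frac{k\,Re}{c_0}\|\mathbf{F}^{n+1}\|^2$ and $2k\gamma q^{n+1}\le k\gamma|q^{n+1}|^2+k\gamma$, leaving strictly positive leftover multiples of $\|\ub^{n+1}\|^2$ and $|q^{n+1}|^2$. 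Invoking the norm equivalences \eqref{equi1}--\eqref{equi2}, these leftover terms dominate a positive multiple $\lambda k$ of the $G$-norm $E^{n+1}:=\|(\ub^{n+1},\ub^n)\|_G^2+|(q^{n+1},q^n)|_G^2$, producing an inequality of the form $E^{n+1}-E^n+\lambda k\,E^{n+1}\le k\,C_0$, where $C_0$ depends only on $\|\mathbf{F}\|_{L^\infty(0,\infty;L^2)}$ and $\gamma$. This yields $E^{n+1}\le(1+\lambda k)^{-1}E^n+(1+\lambda k)^{-1}k\,C_0$, and iterating the geometric recursion gives a uniform-in-time bound $E^n\le\max\{E^1,C_0/\lambda\}$ for all $n$. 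The equivalence \eqref{equi1}--\eqref{equi2} then returns the claimed bound on $\|\ub^n\|+|q^n|$.

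I expect the main obstacle to be bookkeeping rather than conceptual: ensuring the coefficient $\lambda$ stays strictly positive and independent of $k$ after the Young's-inequality splittings, so that the recursion is a genuine uniform contraction for \emph{every} $k>0$ with no time-step restriction. One must be careful that the fraction of the dissipation spent absorbing the forcing leaves a definite surplus proportional to the full $G$-norm, which requires correctly matching the Poincar\'e constant against the $G$-norm equivalence constants. A secondary point is the initialization: since the two-step scheme is started by a first-order method, I would separately argue that $E^1$ is finite and controlled by $\|\ub_0\|_{H^1}$, so it may legitimately enter the constant $C$.
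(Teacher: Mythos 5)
Your overall route is the same as the paper's: test \eqref{DDSe-NS} with $k\,\ub^{n+1}$, multiply \eqref{DSAV-NS} by $k\,q^{n+1}$, let the advection integral cancel, telescope with the BDF2 $G$-norm identity, and absorb the forcing via Poincar\'e and Young. Those steps are sound (modulo two cosmetic slips: the second-difference term in the BDF2 identity is $\tfrac14\|\ub^{n+1}-2\ub^n+\ub^{n-1}\|^2$ in the plain $L^2$ norm, not a ``$G$-norm'' of a single entry, and your prefactor $2k$ is inconsistent with the identity as written). The genuine gap is in the contraction step. After Young's inequality the surviving dissipation has the form $c_1 k\|\ub^{n+1}\|^2+c_2 k|q^{n+1}|^2$, i.e.\ it involves \emph{only} the time level $n+1$. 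You claim, invoking \eqref{equi1}--\eqref{equi2}, that this dominates $\lambda k\,E^{n+1}$ with $E^{n+1}:=\|\Av^{n+1}\|_G^2+|Q^{n+1}|_G^2$. That is false: by those same equivalences $E^{n+1}$ is comparable to $\|\ub^{n+1}\|^2+\|\ub^{n}\|^2+|q^{n+1}|^2+|q^{n}|^2$, which contains the level-$n$ quantities, and taking $\ub^{n+1}=0$, $q^{n+1}=0$, $\ub^{n}\neq 0$ shows no $\lambda>0$ can work. Hence the inequality $E^{n+1}-E^n+\lambda k E^{n+1}\le kC_0$ does not follow, and the geometric recursion that delivers uniform-in-time boundedness collapses. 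Contrary to your closing remark, this is not a matter of bookkeeping the Poincar\'e constant against $c_l,c_u$; it is structural: one-level dissipation cannot control a two-level energy.

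The paper closes exactly this gap with an augmented energy. Add $\beta k(\|\ub^{n}\|^2+|q^{n}|^2)$ to \emph{both} sides of \eqref{es-3} and define
\begin{align*}
E^n:=\|\Av^{n}\|_G^2+|Q^{n}|_G^2+\beta k\big(\|\ub^{n}\|^2+|q^{n}|^2\big),
\end{align*}
as in \eqref{dis-en}. On the left, the added level-$n$ term together with part of the dissipation dominates $\beta c_l k$ times the two-level $G$-energy at level $n+1$ (this is where \eqref{equi1} legitimately enters, under the smallness condition \eqref{conb-1} on $\beta$), while the remaining dissipation dominates the $\beta k$-augmentation at level $n+1$ under the second condition \eqref{conb-2}. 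This produces $(1+\beta c_l k)E^{n+1}\le E^n+kC_0$, i.e.\ \eqref{es-5}, after which your geometric iteration and your initialization remark go through verbatim. (An alternative repair, not used in the paper, is a weighted summation: multiply the one-step inequality by $(1+\epsilon k)^{n}$, sum in $n$, and absorb $\epsilon k\sum(1+\epsilon k)^n E^n$ into the accumulated dissipation using that $E^n$ is controlled by the dissipation at levels $n$ and $n-1$; either way, an idea beyond what you wrote is required.)
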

\begin{proof}
	Testing Eq. \eqref{DDSe-NS} with $\ub^{n+1} k$, multiplying Eq. \eqref{DSAV-NS} by $q^{n+1} k$, and taking summation of the resultants, one obtains
	\begin{align}\label{es-1}
		&(|| \Av^{n+1}||_G^2+|Q^{n+1}|_G^2)-(|| \Av^{n}||_G^2+|Q^{n}|_G^2)+\frac{1}{4}||\ub^{n+1}-2\ub^n+\ub^{n-1} ||^2+\frac{1}{4}|q^{n+1}-2q^n+q^{n-1} |^2\nonumber \\
		&+\frac{k}{Re} || \nabla   \ub^{n+1}||^2+k \gamma |q^{n+1}|^2 \leq \frac{c_0}{2Re} k || \ub^{n+1}||^2+\frac{k Re}{2c_0}||\mathbf{F}||^2+\frac{k \gamma}{2}  |q^{n+1}|^2+\frac{k \gamma}{2 }.
	\end{align}
 which simplifies to
\begin{align}\label{es-3}
	&(|| \Av^{n+1}||_G^2+|Q^{n+1}|_G^2)+\frac{c_0}{4Re}k || \ub^{n+1}||^2+\frac{k \gamma}{2}  |q^{n+1}|^2 \leq (|| \Av^{n}||_G^2+|Q^{n}|_G^2)+\frac{Re k}{2c_0}||\mathbf{F}||^2+\frac{k \gamma}{2}.
\end{align}

 Recall the equivalence of norms in \eqref{equi1} and \eqref{equi2}. Let
 \begin{align}\label{conb-1}
     0<\beta\leq \min \{\frac{c_0}{8Re}, \frac{\gamma}{4}\}.
 \end{align}
 Adding $\beta k(||\ub^n||^2+|q^n|^2)$ to both sides of \eqref{es-3} yield
\begin{align}\label{es-4}
	&(|| \Av^{n+1}||_G^2+|Q^{n+1}|_G^2)(1+\beta c_l k)+\frac{c_0}{8Re}k || \ub^{n+1}||^2+\frac{k \gamma}{4}  |q^{n+1}|^2 \leq (|| \Av^{n}||_G^2+|Q^{n}|_G^2)\nonumber \\
	&+\beta k(||\ub^n||^2+|q^n|^2)
	+\frac{Re k}{2c_0}||\mathbf{F}||^2+\frac{k \gamma}{2}
\end{align}
Define
\begin{align}
	E^n:=|| \Av^n||_G^2+|Q^{n}|_G^2+\beta k(||\ub^n||^2+|q^n|^2). \label{dis-en}
\end{align}
Further imposing 
\begin{align}\label{conb-2}
    (1+\beta c_l k) \beta \leq \min \{\frac{c_0}{8Re}, \frac{\gamma}{4}\},
\end{align}
one obtains
\begin{align}\label{es-5}
	E^{n+1} (1+\beta c_l k) \leq E^n+ \frac{Re k}{2c_0}||\mathbf{F}||^2+\frac{k \gamma}{2}.
\end{align}

In light of the uniform-in-time boundedness of $F$, it follows that
\begin{align}\label{es-6}
	E^{n+1}  \leq \frac{1}{(1+\beta c_l k)^n}E^1+\frac{1}{2\beta c_l}  \big(\frac{Re}{c_0}||\mathbf{F}||_{L^\infty(0,\infty;L^2(\Omega))}^2+\gamma\big), \quad n=2, 3 \ldots.
\end{align}
This completes the proof.
\end{proof}

Assuming $k\leq 1$, one can choose the largest $\beta$ such that
\begin{align}\label{b-con}
  0<\beta\leq \min \{\frac{c_0}{8Re}, \frac{\gamma}{4}\}, \quad   (1+\beta c_l ) \beta \leq \min \{\frac{c_0}{8Re}, \frac{\gamma}{4}\}.
\end{align}
The estimate \eqref{es-6} then implies
\begin{align}\label{es-7}
  \limsup_{n\rightarrow \infty}{E^n} \leq \frac{1}{2\beta c_l}  \big(\frac{Re}{c_0}||\mathbf{F}||_{L^\infty(0,\infty;L^2(\Omega))}^2+\gamma\big):=R_0^2.
\end{align}
\begin{cor}
    For $k\leq 1$ and $||\mathbf{F}||_{L^\infty(0,\infty;L^2(\Omega))} \leq C$,  there exists an absorbing ball of radius $R_0$ to  the scheme \eqref{DDSe-NS}	--\eqref{DSAV-NS}. In particular, if $F$ is independent of time, the scheme \eqref{DDSe-NS}--\eqref{DSAV-NS} generates a discrete dissipative  dynamical system with an absorbing ball on the product space $(\mathbb{H}\times \mathbb{R})^2$.  
\end{cor}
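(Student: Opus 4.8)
The plan is to read the desired conclusion essentially off the uniform bound \eqref{es-7} already obtained in the proof of Theorem \ref{theo1}, and then to recast it in the language of discrete dynamical systems. First I would fix a bounded set $\mathcal{B}$ of admissible initial states and observe from \eqref{es-6} that along any trajectory emanating from $\mathcal{B}$ the energy functional $E^n$ of \eqref{dis-en} obeys $E^{n+1}\le (1+\beta c_l k)^{-n}E^1+R_0^2$. Since $\beta$, $c_l$ and $k$ are fixed and $\beta$ is selected through \eqref{b-con} independently of the data, the geometric factor $(1+\beta c_l k)^{-n}$ tends to zero at a rate that does not depend on the initial state. Hence for any $\varepsilon>0$ there is an integer $N=N(\mathcal{B},\varepsilon)$, depending only on the bound for $E^1$ over $\mathcal{B}$, such that $E^n\le R_0^2+\varepsilon$ for all $n\ge N$. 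Through the norm equivalences \eqref{equi1}--\eqref{equi2}, $E^n$ controls $\|\ub^n\|^2+|q^n|^2$ up to fixed constants, so every trajectory is absorbed, after a uniform number of steps, into a ball whose radius is comparable to $R_0$ in the product norm; this is precisely the absorbing-ball property.

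For the second assertion I would first make the solution operator precise. Because the BDF2 stencil \eqref{BDF2-de} is a two-step formula, the natural phase space is the product space $(\mathbb{H}\times\mathbb{R})^2$, a state being the pair of consecutive data $\big((\ub^{n-1},q^{n-1}),(\ub^n,q^n)\big)$. I would then define $S:(\mathbb{H}\times\mathbb{R})^2\to(\mathbb{H}\times\mathbb{R})^2$ as the map advancing this state by one time level through \eqref{DDSe-NS}--\eqref{DSAV-NS}. The map is single valued: the velocity and pressure are produced by the linear superposition $\ub^{n+1}=\ub_1+q^{n+1}\ub_2$ with $(\ub_i,p_i)$ solving the uniquely solvable Brinkman problems, after which $q^{n+1}$ is recovered from the scalar relation \eqref{DSAV-NS}. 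Testing the Brinkman problem for $\ub_2$ with $\ub_2$ shows that the coefficient multiplying $q^{n+1}$ in that scalar relation equals $\tfrac{3}{2k}+\gamma+\tfrac{3}{2k}\|\ub_2\|^2+\tfrac{1}{Re}\|\nabla\ub_2\|^2$, which is strictly positive and hence invertible, so the update is well defined. When $\mathbf{F}$ is independent of time the forcing $\mathbf{F}^{n+1}$ no longer varies with $n$ and the scalar equation \eqref{DSAV-NS} is itself autonomous; consequently $S$ does not depend on $n$, and the iterates $\{S^n\}_{n\ge 0}$ form a discrete semigroup on $(\mathbb{H}\times\mathbb{R})^2$.

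Finally I would simply combine the two ingredients: $S$ is a well-defined autonomous discrete semigroup for which a ball of radius comparable to $R_0$ is absorbing, which is exactly the definition of a dissipative discrete dynamical system possessing an absorbing ball. I do not anticipate a genuine obstacle, since all the quantitative work already resides in Theorem \ref{theo1}. The only two points that require care are the \emph{uniformity} of the absorption over bounded sets of data—guaranteed by the data-independent contraction rate in \eqref{es-6}—and the \emph{single-valuedness} of $S$, which rests on the strict positivity of the coefficient governing the scalar update for $q^{n+1}$.
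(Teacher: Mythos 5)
Your proposal is correct, and its quantitative core coincides with the paper's own argument: the paper obtains the corollary directly from the geometric-decay estimate \eqref{es-6} and the resulting bound \eqref{es-7}, exactly as you do. Your phrasing is in fact slightly sharper --- absorption into a ball of radius $\sqrt{R_0^2+\varepsilon}$ after a number of steps depending only on the bound for $E^1$ over the bounded set of initial states --- whereas the paper records only the $\limsup$ statement \eqref{es-7}. Where you genuinely go beyond the paper is in the dynamical-systems formalization: the paper never checks that the one-step update is single-valued, i.e.\ that the scalar relation \eqref{DSAV-NS} really determines $q^{n+1}$ once the superposition $\ub^{n+1}=\ub_1+q^{n+1}\ub_2$ is substituted. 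Your verification of this point is correct: testing the Brinkman problem for $\ub_2$ with $\ub_2$ (reading the viscous term in \eqref{DDSe-NS} as $-\tfrac{1}{Re}\Delta$, consistently with the energy estimate \eqref{es-1}) gives $\tfrac{3}{2k}\|\ub_2\|^2+\tfrac{1}{Re}\|\nabla\ub_2\|^2=-\int_\Omega(\overline{\ub}^{n+1}\cdot\nabla)\overline{\ub}^{n+1}\cdot\ub_2\,dx$, so the coefficient multiplying $q^{n+1}$ equals $\tfrac{3}{2k}+\gamma+\tfrac{3}{2k}\|\ub_2\|^2+\tfrac{1}{Re}\|\nabla\ub_2\|^2>0$, and the solution map $S$ on $(\mathbb{H}\times\mathbb{R})^2$ is well defined; autonomy for time-independent $\mathbf{F}$ is then immediate. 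This added step costs little and makes rigorous the assertion, left implicit in the paper, that the scheme ``generates'' a discrete dissipative dynamical system.
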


\begin{rem}
    One solves Eq. \eqref{E-NSe1} exactly to find
    \begin{align*}
        q(t)=1+\int_{0}^t e^{-\gamma(t-s)} \int_{\Omega} (\ub \cdot \nabla) \ub \cdot \ub\, dxds. 
    \end{align*}
\end{rem}
\noindent In the discrete case \eqref{DSAV-NS}, a larger $\gamma$ would reduce the error associated with the discretization of the advection term. In the limit $\gamma \rightarrow \infty$, $q\equiv1$, which would eliminate the effect of $q$ and the scheme would be conditionally stable. In the numerical tests we choose $\gamma=1000$.	

 \begin{rem}
     It is clear that the FSAV-BDF2 scheme is applicable to a general class of forced dissipative dynamical system identified in \cite{WaYu2024}:
     \begin{align*}
	&\frac{d\ub}{dt}+A\ub +N(\ub, \ub)=\mathbf{F}(t), \quad \ub|_{t=0}=\ub_0,
\end{align*}
 where $A$ is a positive symmetric linear operator, $N$ is nonlinear and skew-symmetric,  $N(\ub, \ub) \cdot \ub=0$, and $=\mathbf{F} \in \mathbf{L}^\infty(0,\infty;\mathbf{L}^2(\Omega))$. The FSAV-BDF2 scheme then reads
 \begin{align}
	&\delta_t \ub^{n+1} +A \ub^{n+1}+q^{n+1} N\big(2\ub^{n}-\ub^{n-1}, 2\ub^{n}-\ub^{n-1}\big) =\mathbf{F}^{n+1}, \label{DDSe}\\
	&\delta_t q^{n+1}+\gamma q^{n+1}-N\big(2\ub^{n}-\ub^{n-1}, 2\ub^{n}-\ub^{n-1}\big) \cdot \ub^{n+1}=\gamma. \label{DSAVe}
\end{align}	
Following the proof of Theorem \ref{theo1} one can show that the scheme \eqref{DDSe}--\eqref{DSAVe} is unconditionally long-time stable. 
 \end{rem}

\subsection{The scheme for the 2D Navier-Stokes equations}\label{sec(3)}
It is known that for  2D Navier-Stokes equations the $H^1$ norm of the velocity field   is also uniformly bounded in time if the initial data is in the same class. Here we design a BDF2-SAV method for the 2D Navier-Stokes equations in the streamline-vorticity formulation that preserves this important property. Specifically we consider the following boundary conditions
\begin{align}\label{SV-BC}
   \ub \cdot \mathbf{n}=0, \quad \omega:=\nabla \times \ub=0, \text{ on } \partial \Omega. 
\end{align}
In the case of a simply connected domain, the 2D Navier-Stokes equations can then be written in the streamfunction-vorticity form as follows
\begin{align}
  &\frac{\partial \omega}{\partial t}-\frac{1}{Re} \Delta \omega  +\nabla^{\perp} \psi \cdot \nabla \omega=F, \quad \omega=-\Delta \psi, \label{SVe1}\\
  &\omega=\psi=0, \text{ on } \partial \Omega. \label{SVe2}
\end{align}

The BDF2-SAV scheme for the system \eqref{SVe1}--\eqref{SVe2} is :
\begin{align}
    &\delta_t \omega^{n+1}-\frac{1}{Re} \Delta \omega^{n+1}+ q^{n+1} \nabla^{\perp} \overline{\psi}^{n+1} \cdot \nabla \overline{\omega}^{n+1}=F^{n+1}, \quad \omega^{n+1}=-\Delta \psi^{n+1}, \label{DS-SVe1} \\
    &\delta_t q^{n+1}+\gamma q^{n+1}-\int_\Omega  \nabla^{\perp} \overline{\psi}^{n+1} \cdot \nabla \overline{\omega}^{n+1} \omega^{n+1}\, dx=\gamma. \label{DS-SVe2}
\end{align}

\begin{theorem}\label{theo2}
   Suppose $\omega_0 \in H^1_0(\Omega), F\in L^\infty(0, \infty; L^2(\Omega))$, and the scheme \eqref{DS-SVe1}--\eqref{DS-SVe2} is initiated by the first order counterpart. Then for $k\leq 1$,  the scheme \eqref{DS-SVe1}--\eqref{DS-SVe2} is long-time stable in the sense that
    $||\omega^n||_{H^1}+||\psi^n||_{H^3} \leq C$ for any $n\geq 2$ where  the constant $C$ depends on $||\omega_0||_{H^1(\Omega)}$ and $||F||_{L^\infty(0, \infty;L^2(\Omega))}$.
\end{theorem}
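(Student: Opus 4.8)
The plan is to mirror the two-level energy argument of Theorem \ref{theo1}, but to carry it out at both the $L^2$ and the $H^1$ levels of the vorticity, and to feed the first estimate into a discrete uniform Gronwall argument for the second. The whole scheme rests on a preliminary reduction: since $\omega=-\Delta\psi$ with $\omega=\psi=0$ on $\partial\Omega$, elliptic regularity gives $\|\psi^n\|_{H^3}\le C\|\omega^n\|_{H^1}$, so it suffices to bound $\|\omega^n\|+\|\nabla\omega^n\|$ uniformly in $n$.

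First I would establish the enstrophy ($L^2$-in-$\omega$) bound exactly as in Theorem \ref{theo1}: test \eqref{DS-SVe1} with $\omega^{n+1}k$, multiply \eqref{DS-SVe2} by $q^{n+1}k$, and add. The advection term in the vorticity equation is $q^{n+1}\int_\Omega\nabla^{\perp}\overline\psi^{n+1}\cdot\nabla\overline\omega^{n+1}\,\omega^{n+1}\,dx$, and the SAV equation contributes exactly its negative after multiplication by $q^{n+1}$, so the two nonlinear terms cancel identically. The BDF2 operator produces the $G$-norm increments plus a nonnegative second difference, the viscous term yields $\tfrac{k}{Re}\|\nabla\omega^{n+1}\|^2$, and Poincar\'e together with Young dispose of the forcing, reproducing an inequality of the form \eqref{es-5}. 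Invoking \eqref{equi1}--\eqref{equi2} this gives the uniform bounds $\|\omega^n\|\le C$ and $|q^n|\le C$, and, crucially, a window bound $k\sum_{j=m}^{m+N}\|\nabla\omega^{j+1}\|^2\le C(1+r)$ on any interval of fixed length $r=Nk$, the discrete analogue of $\int_t^{t+r}\|\nabla\omega\|^2\,ds\le a_3$.

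The heart of the proof is the palinstrophy ($H^1$-in-$\omega$) estimate, obtained by testing \eqref{DS-SVe1} with $-\Delta\omega^{n+1}k$. Integration by parts (legitimate since $\omega^{n+1}\in H^1_0$) turns the BDF2 term into the same $G$-norm algebra as before, now in the variable $\nabla\omega$, and the viscous term into $\tfrac{k}{Re}\|\Delta\omega^{n+1}\|^2$. Here the SAV cancellation is no longer available, so the nonlinear term $q^{n+1}\int_\Omega\nabla^{\perp}\overline\psi^{n+1}\cdot\nabla\overline\omega^{n+1}\,(-\Delta\omega^{n+1})\,dx$ must be controlled directly; this is the main obstacle. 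Using $|q^{n+1}|\le C$ from the first step, the two-dimensional Ladyzhenskaya inequality $\|f\|_{L^4}\le C\|f\|^{1/2}\|\nabla f\|^{1/2}$, the elliptic bounds $\|\nabla^{\perp}\overline\psi\|_{L^4}\le C\|\overline\psi\|_{H^2}\le C\|\overline\omega\|$ and $\|D^2\overline\omega\|\le C\|\Delta\overline\omega\|$, I would bound this term by $C\|\overline\omega\|\,\|\nabla\overline\omega\|^{1/2}\|\Delta\overline\omega\|^{1/2}\|\Delta\omega^{n+1}\|$ and then apply Young twice. This absorbs a fraction of $\tfrac{k}{Re}\|\Delta\omega^{n+1}\|^2$ into the viscous term and leaves a remainder of the form $k\,\tilde C\|\nabla\overline\omega^{n+1}\|^2+k\,\epsilon\|\Delta\overline\omega^{n+1}\|^2$, where $\tilde C$ is bounded (because $\|\overline\omega\|\le C$) and $\epsilon$ is small.

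The remaining difficulty is the mismatch of time levels introduced by the extrapolation $\overline\omega^{n+1}=2\omega^n-\omega^{n-1}$. Since $\|\Delta\overline\omega^{n+1}\|^2\le C(\|\Delta\omega^n\|^2+\|\Delta\omega^{n-1}\|^2)$, summing the palinstrophy inequality over a window and choosing $\epsilon$ small enough lets the extrapolated Laplacian terms be absorbed into the dissipation $\tfrac{k}{Re}\sum\|\Delta\omega^{j+1}\|^2$ after an index shift, the finitely many boundary-of-window terms being controlled by the already-established enstrophy bound. What survives is a discrete inequality $E^{n+1}-E^n\le k\,g_n E^n+k\,h_n$ with $E^n\sim\|\nabla\omega^n\|^2$, a uniformly bounded coefficient $g_n$, a forcing term $h_n=C\,Re\|F\|^2$, and, from the first step, $k\sum E^j$ bounded over every window. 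I would then invoke a discrete uniform Gronwall lemma to conclude $\|\nabla\omega^n\|^2\le C$ for all $n\ge2$, uniformly in time. Combined with the enstrophy bound this gives $\|\omega^n\|_{H^1}\le C$, and elliptic regularity then yields $\|\psi^n\|_{H^3}\le C$, completing the proof; the first-order startup affects only $\omega^1,q^1$, whose boundedness is inherited from the first-order estimate and does not disturb the window arguments.
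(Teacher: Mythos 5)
Your $L^2$ step, the choice of test function $-k\Delta\omega^{n+1}$, and the per-step estimate of the trilinear term are all sound and parallel the paper's argument. The genuine gap is in how you dispose of the extrapolated Laplacian contribution $\epsilon k\|\Delta\overline{\omega}^{n+1}\|^2\le Ck\epsilon\big(\|\Delta\omega^{n}\|^2+\|\Delta\omega^{n-1}\|^2\big)$. You sum over a window and assert that the leftover boundary-of-window terms are ``controlled by the already-established enstrophy bound.'' They are not: those leftover terms are of the form $\epsilon k\|\Delta\omega^{m}\|^2$, $\epsilon k\|\Delta\omega^{m-1}\|^2$ at the window start $m$, and the $L^2$ step controls only $\|\omega^m\|$ uniformly and $k\sum\|\nabla\omega^j\|^2$ over windows --- it gives no control whatsoever on $\|\Delta\omega^m\|$, which lives two derivatives above the enstrophy level. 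The same circularity blocks your uniform-Gronwall step: if $E^n\sim\|\nabla\omega^n\|^2$ alone, the window bound $k\sum E^j\le C$ holds but the per-step recursion $E^{n+1}-E^n\le k g_n E^n+k h_n$ with bounded $h_n$ is false (the unabsorbed $\epsilon k\|\Delta\omega^n\|^2$ terms sit in $h_n$); if instead you augment $E^n$ with $k$-weighted Laplacian terms so the per-step recursion holds, the required window bound on $k\sum E^j$ now involves $k^2\sum\|\Delta\omega^j\|^2$, which is precisely the palinstrophy dissipation you are trying to establish. The argument can be repaired, but only with an extra layer you do not supply (e.g.\ bounding window sums of $k\|\Delta\omega^j\|^2$ through the retained dissipation in terms of the energy at the previous window start, then using the $1/N$ prefactor from averaging to obtain a contraction across windows).

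The paper sidesteps this difficulty entirely by a sharper splitting of the trilinear term. After Ladyzhenskaya it interpolates once more, $\|\nabla\overline{\omega}^{n+1}\|^{1/2}\le \|\overline{\omega}^{n+1}\|^{1/4}\|\Delta\overline{\omega}^{n+1}\|^{1/4}$, so the term is bounded by $C\|\Delta\omega^{n+1}\|\,(\|\omega^n\|+\|\omega^{n-1}\|)^{5/4}(\|\Delta\omega^n\|+\|\Delta\omega^{n-1}\|)^{3/4}$ as in \eqref{H1es-2}. Young's inequality with exponents $(2,\tfrac83,8)$ then leaves, besides small multiples of $\|\Delta\omega^{n+1}\|^2$, $\|\Delta\omega^{n}\|^2$, $\|\Delta\omega^{n-1}\|^2$, the remainder $C(\epsilon)|q^{n+1}|^8(\|\omega^n\|+\|\omega^{n-1}\|)^{10}$, which is \emph{uniformly bounded} by the $L^2$ step --- not a $\|\nabla\overline{\omega}^{n+1}\|^2$ term as in your splitting. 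The small Laplacian terms are then absorbed per step by augmenting the energy with $k$-weighted $\|\Delta\omega\|^2$ terms as in \eqref{En-dis} under the constraints \eqref{Cons-1}--\eqref{Cons-4}, yielding the strict geometric contraction $(1+\beta k)E^{n+1}\le E^n+Ck$ of \eqref{H1es-6}. Uniform boundedness follows at once, with no window sums, no boundary terms, and no uniform Gronwall lemma.
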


\begin{proof}
    The proof for $L^2$ norm is the same as in Theorem \ref{theo1}. For the $H^1$ estimate, we test Eq. \eqref{DS-SVe1} by $-k \Delta \omega$ and perform integration by parts
 \begin{align}\label{H1es-1}
		&|| \nabla V^{n+1}||_G^2-|| \nabla V^{n}||_G^2+\frac{1}{4}||\nabla (\omega^{n+1}-2\omega^n+\omega^{n-1})||^2 +\frac{k}{Re}
		 || \Delta \omega^{n+1}||^2 \nonumber \\
   &\leq \frac{k}{4Re}  ||\Delta  \omega^{n+1}||^2+k Re||\mathbf{F}||^2+k q^{n+1} \int_\Omega \nabla^{\perp} \overline{\psi}^{n+1} \cdot \nabla \overline{\omega}^{n+1} \Delta \omega^{n+1}\, dx.
	\end{align}
 One controls the trilinear term as follows
\begin{align}\label{H1es-2}
    &\Big|q^{n+1} \int_\Omega \nabla^{\perp} \overline{\psi}^{n+1} \cdot \nabla \overline{\omega}^{n+1} \omega^{n+1}\, dx\Big|\leq |q^{n+1}| ||\Delta \omega^{n+1}|| \nabla^{\perp} \overline{\psi}^{n+1}||_{L^4} || \nabla \overline{\omega}^{n+1} ||_{L^4} \nonumber \\
    &\leq C |q^{n+1}| \cdot  ||\Delta \omega^{n+1}|| (||\omega^n||+||\omega^{n-1}||)||\nabla (2\omega^n-\omega^{n-1}) ||^{\frac{1}{2}} ||\Delta (2\omega^n-\omega^{n-1})||^{\frac{1}{2}} \nonumber \\
    &\leq C |q^{n+1}| \cdot ||\Delta \omega^{n+1}|| (||\omega^n||+||\omega^{n-1}||)||(2\omega^n-\omega^{n-1}) ||^{\frac{1}{4}} ||\Delta (2\omega^n-\omega^{n-1})||^{\frac{3}{4}} \nonumber \\
    & \leq C ||\Delta \omega^{n+1}|| (||\omega^n||+||\omega^{n-1}||)^{\frac{5}{4}} (||\Delta \omega^n||+||\Delta \omega^{n-1}||)^{\frac{3}{4}} \nonumber \\
    &\leq \frac{1}{4 Re} ||\Delta \omega^{n+1}||^2+ \epsilon ||\Delta \omega^n||^2+\epsilon ||\Delta \omega^{n-1}||^2+ C(\epsilon) |q^{n+1}|^8 (||\omega^n||+|| \omega^{n-1}||)^{10}.
\end{align}
It follows that
\begin{align}\label{H1es-3}
&|| \nabla V^{n+1}||_G^2-|| \nabla V^{n}||_G^2+\frac{1}{4}||\nabla (\omega^{n+1}-2\omega^n+\omega^{n-1})||^2 +\frac{k}{2Re}
		 || \Delta \omega^{n+1}||^2 \nonumber \\
& \leq k \epsilon (||\Delta \omega^n||^2+||\Delta \omega^{n-1}||^2) +k \Big(C(\epsilon)|q^{n+1}|^8 (||\omega^n||+|| \omega^{n-1}||)^{10}+ Re||\mathbf{F}||^2\Big).
\end{align}
Since there exists a constant $\Lambda_1>0$ such that $||\Delta \omega||^2 \geq \Lambda_1 ||\nabla \omega||^2, \forall \omega \in H^2(\Omega) \cap H^1_0(\Omega)$, one has for any $\alpha \in (0, \frac{1}{2})$
\begin{align}\label{H1es-4}
    \frac{k}{2Re}
		 || \Delta \omega^{n+1}||^2 \geq \frac{k}{Re} (\frac{1}{2}-\alpha) || \Delta \omega^{n+1}||^2+ \frac{k}{Re} \alpha \Lambda_1 || \nabla \omega^{n+1}||^2,
\end{align}
hence
 \begin{align}\label{H1es-5}
&|| \nabla V^{n+1}||_G^2+\frac{k}{Re} \alpha \Lambda_1 || \nabla \omega^{n+1}||^2+k \theta_1 || \nabla \omega^{n}||^2+\frac{k}{Re} (\frac{1}{2}-\alpha) || \Delta \omega^{n+1}||^2 +\theta_2 k ||\Delta \omega^{n}||^2 \nonumber \\
& \leq || \nabla V^{n}||_G^2  + k \theta_1 || \nabla \omega^{n}||^2+ k (\epsilon+\theta_2) ||\Delta \omega^n||^2+\epsilon k||\Delta \omega^{n-1}||^2 \nonumber \\
&\quad +k \Big(C(\epsilon)|q^{n+1}|^8 (||\omega^n||+|| \omega^{n-1}||)^{10}+ Re||\mathbf{F}||^2\Big),
\end{align}
for any $\theta_1, \theta_2>0$. 

One now defines
\begin{align}
    \label{En-dis}
    E^n:=|| \nabla V^{n}||_G^2  + k \theta_1 || \nabla \omega^{n}||^2+ k \theta_3 || \nabla \omega^{n-1}||^2+ k (\epsilon+\theta_2) ||\Delta \omega^n||^2+\epsilon k||\Delta \omega^{n-1}||^2,
\end{align}
with $\theta_3>0$ to be determined.
Provided that $k\leq 1$ and
\begin{align}
    & \frac{\beta}{c_l}+(1+\beta)\theta_1 \leq \frac{\alpha \Lambda_1}{Re}, \label{Cons-1}\\
    & \frac{\beta}{c_l}+(1+\beta)\theta_3 \leq \theta_1, \label{Cons-2}
\end{align}
one obtains
\begin{align}
    &\frac{k}{Re} \alpha \Lambda_1 || \nabla \omega^{n+1}||^2+k \theta_1 || \nabla \omega^{n}||^2 \nonumber\\
    &\geq \beta k || \nabla V^{n+1}||^2 +(1+\beta k)\Big(\theta_1 k ||\nabla \omega^{n+1}||^2+\theta_3 k ||\nabla \omega^n||^2\Big). \label{Temp-1}
\end{align}
Furthermore, if
\begin{align}
    & (1+\beta)(\epsilon+\theta_2) \leq \frac{1}{Re}(\frac{1}{2}-\alpha), \label{Cons-3}\\
    & (1+\beta)\epsilon \leq \theta_2, \label{Cons-4}
\end{align}
then the inequality \eqref{H1es-5} becomes
\begin{align}\label{H1es-6}
(1+\beta k) E^{n+1} \leq E^n+ k \Big(C(\epsilon)|q^{n+1}|^8 (||\omega^n||+|| \omega^{n-1}||)^{10}+ Re||\mathbf{F}||^2\Big).
\end{align}
The constraints \eqref{Cons-1}, \eqref{Cons-2}, \eqref{Cons-3} and \eqref{Cons-4} can always be fulfilled since there are six free parameters. 
By the uniform in time bounds in the $L^2$ norm, it follows $E^n \leq C, n=2, 3 \cdots$. The elliptic regularity theory further implies $||\psi^{n}||_{H^3} \leq C$. This completes the proof.
\end{proof}

\begin{rem}
    The doubly periodic case can be handled in exactly the same way after we separate the average of the vorticity field. Notice that the mean of the vorticity is an invariant of the dynamics assuming the external forcing is mean-zero. The average of the vorticity contains a linear growth term if the average of the external forcing term is non-zero Once we focus on the fluctuation (mean-zero) part, we can apply the Poincar\'e -Wirttinger inequality instead of the classical Poincar\'e inquality. 
\end{rem}

\section{Error analysis}
One performs the error analysis of the general scheme \eqref{DDSe-NS}--\eqref{DSAV-NS} for a fixed fixed final time $T>0$. It should be noted that  long-time convergence of the numerical solutions is generally not expected due to the chaotic and turbulent behavior of the NSE.  For brevity, here one only considers the temporal discretization and focuses on the analysis of the scheme in 2D. The corresponding result in 3D would be local-in-time. The analysis is mostly standard, cf. \cite{LSL2021}.

Recall the vector space $\mathbf{V}:=\{\vb\in \mathbf{H}^1_0(\Omega), \nabla \cdot \vb=0\}$.
For simplicity we suppress the spatial variables in the notation of $\ub$ and $p$, that is, $\ub(t^n)=\ub(t^n, \cdot)$ and $p(t^n)=p(t^n, \cdot)$. We emphasize here that $q(t)\equiv1$.
Denote the error functions by
\begin{align}\label{err-no}
    \eu^n:= \ub^n-\ub(t^n); \quad \ep^n:=p^n- p(t^n); \quad \eq^n:=q^n- q(t^n)=q^n-1.
\end{align} 
We also follow the convention for vectors utilized in G-norm
\begin{align*}
  \eV^n:=\big[\eu^n, \eu^{n-1}\big]^T; \quad   \eQ^n:=\big[\eq^n, \eq^{n-1}\big]^T.
\end{align*}
Then the error functions satisfy the following equations
\begin{align}
	&\delta_t \eu^{n+1} +\frac{1}{Re} \Delta   \eu^{n+1}+\nabla \ep^{n+1}=\ub(t^{n+1})\cdot \nabla\ub(t^{n+1}) -q^{n+1} \overline{\ub}^{n+1}\cdot \nabla  \overline{\ub}^{n+1}+R_{\ub}^{n+1}, \quad \nabla \cdot \eu^{n+1}=0, \label{DDSe-NSer}\\
	&\delta_t \eq^{n+1}+\gamma \eq^{n+1}=\int_\Omega  \overline{\ub}^{n+1}\cdot \nabla  \overline{\ub}^{n+1} \cdot  \ub^{n+1}\, dx-\int_\Omega  \ub(t^{n+1})\cdot \nabla\ub(t^{n+1}) \cdot  \ub(t^{n+1})\, dx, \label{DSAV-NSer}
\end{align}	
	where $R_{\ub}^{n+1}:=\delta_t \ub^{n+1} (\cdot )-\frac{\partial \ub}{\partial t}(t^{n+1}, \cdot)$.

 \begin{theorem}
     For the 2D NSE, assuming $\ub \in {H}^3(0,T; \mathbf{H}^1_0(\Omega)) \cap {H}^1(0,T; \mathbf{H}^2(\Omega)) $, and $p \in L^2(0,T; H^1(\Omega)) $, then there holds
     \begin{align}
         ||\eV^{m+1}||_G^2+ ||\eQ^{m+1}||_G^2+k \sum_{n=1}^m ||\nabla \eu^n||^2 +k \gamma \sum_{n=1}^m | \eq^n|^2 +k \sum_{n=1}^m ||\ep^{n+1}||_{L^2(\Omega)/R}^2  \leq C k^4, \quad 1\leq m\leq N-1.
     \end{align}
 \end{theorem}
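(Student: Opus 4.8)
The plan is to run a discrete energy estimate on the error equations \eqref{DDSe-NSer}--\eqref{DSAV-NSer} in complete parallel with the stability proof of Theorem \ref{theo1}, and then close it with a discrete Gronwall inequality on the fixed interval $[0,T]$; the pressure bound is recovered afterwards from the discrete inf-sup condition. Concretely, I would test Eq. \eqref{DDSe-NSer} with $\eu^{n+1}k$, multiply Eq. \eqref{DSAV-NSer} by $\eq^{n+1}k$, and add. The BDF2 operator $\delta_t$ tested against the current value produces the telescoping differences $||\eV^{n+1}||_G^2-||\eV^{n}||_G^2$ and $||\eQ^{n+1}||_G^2-||\eQ^{n}||_G^2$ together with the nonnegative second-difference terms $\tfrac14||\eu^{n+1}-2\eu^n+\eu^{n-1}||^2$ and $\tfrac14|\eq^{n+1}-2\eq^n+\eq^{n-1}|^2$; the viscous term yields the coercive $\tfrac{k}{Re}||\nabla\eu^{n+1}||^2$, the damping yields $k\gamma|\eq^{n+1}|^2$, and the pressure term vanishes because $\nabla\cdot\eu^{n+1}=0$. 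What remains on the right-hand side are (i) the consistency pairing $\int_\Omega R_{\ub}^{n+1}\cdot\eu^{n+1}\,dx$, (ii) the nonlinear momentum discrepancy tested against $\eu^{n+1}$, and (iii) the nonlinear scalar term multiplied by $\eq^{n+1}$.

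The crux of the argument --- and the place where the FSAV design pays off --- is the cancellation between (ii) and (iii). Writing $q^{n+1}=1+\eq^{n+1}$ and using the exact skew-symmetry $\int_\Omega(\ub(t^{n+1})\cdot\nabla)\ub(t^{n+1})\cdot\ub(t^{n+1})\,dx=0$, the scalar right-hand side collapses to $\eq^{n+1}\int_\Omega(\overline{\ub}^{n+1}\cdot\nabla)\overline{\ub}^{n+1}\cdot\ub^{n+1}\,dx$, while (ii) contains the matching term $-\eq^{n+1}\int_\Omega(\overline{\ub}^{n+1}\cdot\nabla)\overline{\ub}^{n+1}\cdot\eu^{n+1}\,dx$. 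Since $\ub^{n+1}=\ub(t^{n+1})+\eu^{n+1}$, the $\eu^{n+1}$-dependent pieces cancel exactly, leaving only $\eq^{n+1}\int_\Omega(\overline{\ub}^{n+1}\cdot\nabla)\overline{\ub}^{n+1}\cdot\ub(t^{n+1})\,dx$. Expanding $\overline{\ub}^{n+1}=\ub(t^{n+1})+(2\ub(t^n)-\ub(t^{n-1})-\ub(t^{n+1}))+\overline{\eu}^{n+1}$ and again invoking skew-symmetry, this residual is bounded by $Ck|\eq^{n+1}|(||\overline{\eu}^{n+1}||+k^2)$ after using the $H^2$-regularity of the exact velocity; Young's inequality then absorbs it into $\tfrac{k\gamma}{4}|\eq^{n+1}|^2$ plus $Ck||\overline{\eu}^{n+1}||^2+Ck^5$.

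For the surviving bilinear discrepancy in (ii), namely $\int_\Omega[(\ub(t^{n+1})\cdot\nabla)\ub(t^{n+1})-(\overline{\ub}^{n+1}\cdot\nabla)\overline{\ub}^{n+1}]\cdot\eu^{n+1}\,dx$, I would insert the extrapolated exact value $2\ub(t^n)-\ub(t^{n-1})$ to split it into an extrapolation-consistency contribution of pointwise size $O(k^2)$ and a contribution bilinear in $\overline{\eu}^{n+1}=2\eu^n-\eu^{n-1}$ and the smooth exact field. Using the standard 2D trilinear estimate for $b(\ub,\vb,\Bw):=\int_\Omega(\ub\cdot\nabla)\vb\cdot\Bw\,dx$, namely $|b(\ub,\vb,\Bw)|\le C||\ub||^{1/2}||\nabla\ub||^{1/2}||\nabla\vb||\,||\Bw||^{1/2}||\nabla\Bw||^{1/2}$, together with $b(\ub,\vb,\vb)=0$ to shift derivatives off $\overline{\eu}^{n+1}$, these terms are controlled by $\tfrac{k}{4Re}||\nabla\eu^{n+1}||^2$ (absorbed into the viscous term) plus lower-order quantities $Ck(||\nabla\eu^n||^2+||\nabla\eu^{n-1}||^2)$ and $L^2$ error terms reserved for Gronwall. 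The truncation $R_{\ub}^{n+1}$ is $O(k^2)$ by Taylor expansion with integral remainder, so that $k\sum_n||R_{\ub}^{n+1}||^2\le Ck^4||\ub||_{H^3(0,T;\mathbf{H}^1_0)}^2$; pairing with $\eu^{n+1}$, applying the Poincar\'e and Young inequalities, contributes $\tfrac{k}{4Re}||\nabla\eu^{n+1}||^2+Ck^5$.

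Assembling the above yields an inequality of the form $(1+ck)(||\eV^{n+1}||_G^2+||\eQ^{n+1}||_G^2)\le(||\eV^{n}||_G^2+||\eQ^{n}||_G^2)+Ck(||\nabla\eu^n||^2+||\nabla\eu^{n-1}||^2+||\eV^n||_G^2+||\eV^{n-1}||_G^2)+Ck^5$, after which summation over $n$ and the discrete Gronwall lemma --- with a constant $C=C(T)$ allowed to grow in $T$ since the final time is fixed --- give the claimed $O(k^4)$ bound on $||\eV^{m+1}||_G^2+||\eQ^{m+1}||_G^2$ and on the accumulated dissipation $k\sum_n||\nabla\eu^n||^2+k\gamma\sum_n|\eq^n|^2$. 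Finally, the pressure estimate is obtained separately: solving \eqref{DDSe-NSer} for $\nabla\ep^{n+1}$ and invoking the discrete inf-sup (LBB) condition gives $||\ep^{n+1}||_{L^2(\Omega)/\mathbb{R}}\le C\sup_{\vb\in\mathbf{H}^1_0}||\nabla\vb||^{-1}\langle\delta_t\eu^{n+1}+\tfrac1{Re}\Delta\eu^{n+1}-(\text{nonlinear})-R_{\ub}^{n+1},\vb\rangle$, and bounding the dual norm of each term by the quantities already controlled yields $k\sum_n||\ep^{n+1}||_{L^2(\Omega)/\mathbb{R}}^2\le Ck^4$. The main obstacle I anticipate is organizing the nonlinear and coupling terms so that the cancellation in (ii)--(iii) is exact and every residual closes at the optimal order $k^4$ within the $G$-norm framework --- in particular keeping the $\overline{\eu}^{n+1}$ contributions, which live at the previous two time levels, compatible with the telescoping $G$-norm so that the discrete Gronwall applies cleanly.
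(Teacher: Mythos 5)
Your skeleton coincides with the paper's own proof: test the momentum error equation \eqref{DDSe-NSer} with $k\eu^{n+1}$ and the scalar error equation \eqref{DSAV-NSer} with $k\eq^{n+1}$, use $q(t)\equiv 1$ so that the terms $\pm\,\eq^{n+1}\big(\overline{\ub}^{n+1}\cdot\nabla\overline{\ub}^{n+1},\eu^{n+1}\big)$ cancel exactly between the two equations, split the remaining discrepancies through the extrapolation defect $D\ub(t^{n+1})=\ub(t^{n+1})-2\ub(t^n)+\ub(t^{n-1})$ and the error extrapolation $2\eu^n-\eu^{n-1}$, close with discrete Gronwall, and recover the pressure from the inf-sup property of the divergence operator (the paper uses the continuous isomorphism onto $L^2(\Omega)/\mathbb{R}$, not a discrete LBB condition, since only time is discretized). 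However, your treatment of the gradient factors has a genuine gap that breaks the final step. Your assembled inequality $(1+ck)\big(\|\eV^{n+1}\|_G^2+\|\eQ^{n+1}\|_G^2\big)\le \|\eV^{n}\|_G^2+\|\eQ^{n}\|_G^2+Ck\big(\|\nabla\eu^n\|^2+\|\nabla\eu^{n-1}\|^2+\cdots\big)+Ck^5$ has dropped the dissipation $\tfrac{k}{Re}\|\nabla\eu^{n+1}\|^2+k\gamma|\eq^{n+1}|^2$ from the left side while keeping standalone gradient error terms with an $O(1)$ constant on the right. After summation, $Ck\sum_n\|\nabla\eu^n\|^2$ is controlled by nothing, and the claimed bound on $k\sum_n\|\nabla\eu^n\|^2$ cannot even be read off. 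Nor can this be repaired by simply making the coefficient ``small'': for the cubic term $k\,b\big(2\eu^n-\eu^{n-1},2\eu^n-\eu^{n-1},\eu^{n+1}\big)$, Ladyzhenskaya gives $\epsilon\tfrac{k}{Re}\|\nabla\eu^{n+1}\|^2+C(\epsilon)kRe\,\|2\eu^n-\eu^{n-1}\|^2\,\|\nabla(2\eu^n-\eu^{n-1})\|^2$, and the prefactor $\|2\eu^n-\eu^{n-1}\|^2$ is only $O(1)$ by the stability bound, not small. The paper resolves this structurally: previous-level gradients never appear as standalone terms but as multiplicative Gronwall coefficients, i.e. $Ck\,\|\nabla(2\eu^n-\eu^{n-1})\|^2\,\|\eV^n\|_G^2$, whose accumulated size $k\sum_n\|\nabla(2\eu^n-\eu^{n-1})\|^2\le C$ is finite on $[0,T]$ by the stability of the scheme plus the regularity of the exact solution, so the discrete Gronwall lemma applies with a bounded exponent.

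The second gap is in your bound $Ck|\eq^{n+1}|\big(\|\overline{\eu}^{n+1}\|+k^2\big)$ for the post-cancellation scalar residual $\eq^{n+1}\big(\overline{\ub}^{n+1}\cdot\nabla\overline{\ub}^{n+1},\ub(t^{n+1})\big)$. The piece $\big(\overline{\ub}^{n+1}\cdot\nabla(2\eu^n-\eu^{n-1}),\ub(t^{n+1})\big)$, even after integrating by parts to move the derivative off the error, is estimated through $\|\overline{\ub}^{n+1}\|_{L^4}\le C\|\overline{\ub}^{n+1}\|^{1/2}\|\nabla\overline{\ub}^{n+1}\|^{1/2}$ and therefore necessarily carries a factor of $\|\nabla\overline{\ub}^{n+1}\|$; similarly the piece quadratic in the error produces $\|\overline{\eu}^{n+1}\|\,\|\nabla\overline{\eu}^{n+1}\|$. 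For the primitive-form scheme, Theorem~\ref{theo1} provides only a uniform-in-time $L^2$ bound (the uniform $H^1$ bound of Theorem~\ref{theo2} is specific to the 2D vorticity--streamfunction scheme), so these gradient factors cannot be bounded pointwise in time. The paper keeps them explicitly as $\tau k\,|\eQ^{n+1}|_G^2\,\|\nabla\overline{\ub}^{n+1}\|^2$, absorbs the last-index instance by choosing $\tau$ so that $\tau k\|\nabla\overline{\ub}^{m+1}\|^2\le\tfrac12$ (legitimate since $k\|\nabla\overline{\ub}^{m+1}\|^2\le C$), and treats the remaining instances as summable Gronwall coefficients. Both repairs are needed before your Gronwall step actually yields the $Ck^4$ bound.
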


 \begin{proof}
Taking the $\bf{L}^2$ inner product of Eqs. \eqref{DDSe-NSer} with $k\eu^{n+1}$ and performing integration by parts, one obtains
\begin{align}\label{err-1}
    ||\eV^{n+1}||_G^2-||\eV^{n}||_G^2+\frac{k}{Re}||\nabla \eu^{n+1}||^2 \leq k\Big(\ub(t^{n+1})\cdot \nabla\ub(t^{n+1}) -q^{n+1} \overline{\ub}^{n+1}\cdot \nabla  \overline{\ub}^{n+1}, \eu^{n+1}\Big)+k||R_{\ub}^{n+1}||\cdot ||\eu^{n+1}||.
\end{align}
Multiplying Eq. \eqref{DSAV-NSer} by $k\eq^{n+1}$ yields
\begin{align}\label{err-2}
|\eQ^{n+1}|_G^2-|\eQ^{n}|_G^2+k\gamma|\eq^{n+1}|^2\leq k\eq^{n+1}\left\{\int_\Omega  \big(\overline{\ub}^{n+1}\cdot \nabla \big) \overline{\ub}^{n+1} \cdot  \ub^{n+1}\, dx-\int_\Omega  \big({\ub}^{n+1}\cdot \nabla \big) {\ub}^{n+1} \cdot  \ub^{n+1}\, dx\right\}.
\end{align}
In light of the fact $q(t)=1$, one writes the first term on the right hand side of Eq. \eqref{err-1} as following
\begin{align}\label{non-1}
    &\Big(q(t^{n+1})\ub(t^{n+1})\cdot \nabla\ub(t^{n+1}) -q^{n+1} \overline{\ub}^{n+1}\cdot \nabla  \overline{\ub}^{n+1}, \eu^{n+1}\Big) \nonumber \\
    &= \Big([\ub(t^{n+1})-\overline{\ub}^{n+1}]\cdot \nabla\ub(t^{n+1}), \eu^{n+1}\Big)+\Big(\overline{\ub}^{n+1}\cdot \nabla  [\ub(t^{n+1})-\overline{\ub}^{n+1}], \eu^{n+1}\Big)\nonumber \\
    &\quad -\eq^{n+1}\Big( \overline{\ub}^{n+1}\cdot \nabla  \overline{\ub}^{n+1}, \eu^{n+1}\Big).
\end{align}
Likewise the right hand side of Eq. \eqref{err-2} can be written as
\begin{align}\label{non-2}
    &\eq^{n+1}\Big( \overline{\ub}^{n+1}\cdot \nabla  \overline{\ub}^{n+1}, \eu^{n+1}\Big)-\eq^{n+1} \Big(\overline{\ub}^{n+1}\cdot \nabla  [\ub(t^{n+1}) \nonumber  -\overline{\ub}^{n+1}], \ub(t^{n+1})\Big)\\
    &-\eq^{n+1}
     \Big([\ub(t^{n+1})-\overline{\ub}^{n+1}]\cdot \nabla(\ub(t^{n+1}),  \ub(t^{n+1})\Big)
\end{align}
Taking summation of the inequalities \eqref{err-1} and \eqref{err-2}, in view of \eqref{non-1} and \eqref{non-2} one derives
\begin{align}
    \label{err-3}
    &(||\eV^{n+1}||_G^2+|\eQ^{n+1}|_G^2)-(||\eV^{n}||_G^2+|\eQ^{n}|_G^2)+\frac{k}{Re}||\nabla \eu^{n+1}||^2+k\gamma|\eq^{n+1}|^2 \leq k||R_{\ub}^{n+1}||\cdot ||\eu^{n+1}|| \nonumber \\
    &+k \Big([\ub(t^{n+1})-\overline{\ub}^{n+1}]\cdot \nabla\ub(t^{n+1}), \eu^{n+1}\Big)+k \Big(\overline{\ub}^{n+1}\cdot \nabla  [\ub(t^{n+1})-\overline{\ub}^{n+1}], \eu^{n+1}\Big)\nonumber \\
    &-k\eq^{n+1} \Big(\overline{\ub}^{n+1}\cdot \nabla  [\ub(t^{n+1})   -\overline{\ub}^{n+1}], \ub(t^{n+1})\Big)-k\eq^{n+1}
     \Big([\ub(t^{n+1})-\overline{\ub}^{n+1}]\cdot \nabla\ub(t^{n+1}),  \ub(t^{n+1})\Big)  \\
     &:=\sum_{i=1}^5 I_i,  \nonumber 
\end{align}
where $I_i, i=1\ldots 5$ are the five terms on the right hand side of \eqref{err-3}.

For $I_1$,
\begin{align}
    \label{I1}
    k||R_{\ub}^{n+1}||\cdot ||\eu^{n+1}|| \leq \epsilon \frac{k}{Re} ||\nabla \eu^{n+1}||^2+C(\epsilon) k Re ||R_{\ub}^{n+1}||^2,
\end{align}
with $\epsilon>0$ to be determined. By virtue of the notation in \eqref{err-no} one writes
\begin{align} \label{dif}
    \ub(t^{n+1})-\overline{\ub}^{n+1}=D\ub(t^{n+1})-2\eu^n+\eu^{n-1}
\end{align}
with $D\ub(t^{n+1}):=\ub(t^{n+1})-2\ub(t^{n})+\ub(t^{n-1})$. Then 
$I_2$ is estimated  by H\"{o}lder's inequality and the Sobolev embedding as follows
\begin{align}
    \label{I2}
    |I_2| &\leq Ck ||\ub(t^{n+1})-\overline{\ub}^{n+1}|| \cdot ||\ub(t^{n+1})||_{H^2}\cdot ||\nabla \eu^{n+1}|| \nonumber \\
    &\leq \epsilon \frac{k}{Re}||\nabla \eu^{n+1}||^2+k C(\epsilon)Re ||\ub(t^{n+1})||_{H^2}^2 (||D\ub(t^{n+1})||^2+||2\eu^n-\eu^{n-1}||^2) \nonumber \\
    &\leq \epsilon \frac{k}{Re}||\nabla \eu^{n+1}||^2+k C(\epsilon)Re ||\ub(t^{n+1})||_{H^2}^2 (||D\ub(t^{n+1})||^2+||\eV^n||_G^2).
\end{align}
By \eqref{dif} one writes $I_3$ as
\begin{align}\label{I3_1}
I_3&=k \Big(\overline{\ub}^{n+1}\cdot \nabla  D\ub(t^{n+1}), \eu^{n+1}\Big)- k \Big([2\eu^n-\eu^{n-1}]\cdot \nabla  [2\eu^n-\eu^{n-1}], \eu^{n+1}\Big)\nonumber \\
&-k \Big([2\ub(t^n)-\ub(t^{n-1})]\cdot \nabla  [2\eu^n-\eu^{n-1}], \eu^{n+1}\Big).
\end{align}
 Recall the 2D Ladyzhenskaya's inequality \cite{Ladyzhenskaya1969}
\begin{align}
    \label{Lady}
    ||\ub||_{L^4}^2 \leq 2 ||\ub||\cdot ||\nabla \ub||.
\end{align}
The first term in \eqref{I3_1} satisfies
\begin{align*}
       \Big|  k \Big(\overline{\ub}^{n+1}\cdot \nabla  D\ub(t^{n+1}), \eu^{n+1}\Big)    \Big|  &=\Big|  k \Big(\overline{\ub}^{n+1}\cdot \nabla \eu^{n+1} , D\ub(t^{n+1})\Big)    \Big| \nonumber \\
       &\leq k ||\Big(\overline{\ub}^{n+1}||_{L^4} ||\nabla \eu^{n+1}||\cdot ||D\ub(t^{n+1})||_{L^4} \\
       & \leq C k||\overline{\ub}^{n+1}||^{1/2} ||\nabla \overline{\ub}^{n+1}||^{1/2} ||\nabla \eu^{n+1}||\cdot ||D\ub(t^{n+1})||^{1/2}  ||\nabla D\ub(t^{n+1})||^{1/2} \\
       &\leq \epsilon \frac{k}{Re} ||\nabla \eu^{n+1} ||^2+ C(\epsilon)k Re ||\overline{\ub}^{n+1}||^2 ||\nabla D\ub(t^{n+1})||^2+ Ck ||\nabla \overline{\ub}^{n+1}||^2||D\ub(t^{n+1})||^2.
\end{align*}
Likewise the second term in \eqref{I3_1} has the estimate
\begin{align*}
    \Big|k \Big([2\eu^n-\eu^{n-1}]\cdot \nabla  [2\eu^n-\eu^{n-1}], \eu^{n+1}\Big)\Big| 
    &\leq Ck ||2\eu^n-\eu^{n-1}|| \cdot ||\nabla  [2\eu^n-\eu^{n-1}]||\cdot ||\nabla \eu^{n+1} || \\
    &\leq \epsilon \frac{k}{Re} ||\nabla \eu^{n+1} ||^2+ C(\epsilon)k Re ||\nabla  [2\eu^n-\eu^{n-1}]||^2||\eV^n||_G^2.
\end{align*}
The third term in $I_3$ is estimated in the same way as \eqref{I2}.
 Hence
\begin{align}\label{I3}
    &|I_3| \leq 3 \epsilon \frac{k}{Re} ||\nabla \eu^{n+1} ||^2+ C(\epsilon)k Re ||\overline{\ub}^{n+1}||^2||\nabla D\ub(t^{n+1})||^2 +Ck ||\nabla \overline{\ub}^{n+1}||^2||D\ub(t^{n+1})||^2\nonumber \\
    &\quad \quad + C(\epsilon)k Re (||\nabla  [2\eu^n-\eu^{n-1}]||^2+||2\ub(t^n)-\ub(t^{n-1})||_{H^2}^2)||\eV^n||_G^2.
\end{align}
One controls $I_4$ as follows
\begin{align}
    \label{I4}
   |I_4|  &\leq k |\eq^{n+1}|\left\{ \Big|\Big(\overline{\ub}^{n+1}\cdot \nabla  D \ub(t^{n+1}), \ub(t^{n+1})\Big)\Big|+\Big|\Big(\overline{\ub}^{n+1}\cdot \nabla  (2\eu^n-\eu^{n-1}), \ub(t^{n+1})\Big)\Big|\right\} \nonumber \\
    &\leq \theta k \gamma |\eq^{n+1}|^2+k\frac{C(\theta)}{ \gamma} ||\overline{\ub}^{n+1}||^2 ||\ub(t^{n+1})||_{H^2}^2 ||\nabla  D \ub(t^{n+1})||^2+\tau k |\eQ^{n+1}|_G^2 ||\nabla \overline{\ub}^{n+1}||^2 \nonumber \\
    &\quad + C(\tau)k ||\ub(t^{n+1})||_{H^2}^2||\eV^n||_{G}^2,
\end{align}
with $\theta, \tau>0$ to be determined.
Likewise $I_5$ satisfies the bound
\begin{align}
    \label{I5}
    |I_5|&\leq k |\eq^{n+1}|\left\{ \Big| \Big(D\ub(t^{n+1})\cdot \nabla\ub(t^{n+1}),  \ub(t^{n+1})\Big)  \Big|+ \Big|\Big([2\eu^n-\eu^{n-1}]\cdot \nabla\ub(t^{n+1}),  \ub(t^{n+1})\Big)\Big|\right\} \nonumber \\
    &\leq 2\theta k \gamma |\eq^{n+1}|^2+k\frac{C(\theta)}{ \gamma}||\nabla \ub(t^{n+1})||^2||\ub(t^{n+1})||_{H^2}^2( ||D\ub(t^{n+1})||^2+||\eV^n||_{G}^2).
\end{align}

 Taking $\epsilon=\frac{1}{10}$ and $\theta=\frac{1}{6}$, and collecting the estimates of $I_i, i=1\ldots 5$ into \eqref{err-3}, one obtains
 \begin{align}
     \label{err-4}
     &(||\eV^{n+1}||_G^2+|\eQ^{n+1}|_G^2)-(||\eV^{n}||_G^2+|\eQ^{n}|_G^2)+\frac{k}{2Re}||\nabla \eu^{n+1}||^2+k\frac{\gamma}{2}|\eq^{n+1}|^2 \nonumber \\
     &\leq  Ck  \left\{\big(Re+C(\tau)+\frac{1}{\gamma}||\nabla \ub(t^{n+1})||^2\big)||\ub(t^{n+1})||_{H^2}^2 +||2\ub(t^n)-\ub(t^{n-1})||_{H^2}^2+||\nabla  [2\eu^n-\eu^{n-1}]||^2\right\}||\eV^n||_G^2 \nonumber \\
     &\quad +Ck  \left\{\big(Re+\frac{1}{\gamma}||\overline{\ub}^{n+1}||^2+\frac{1}{\gamma}||\nabla \ub(t^{n+1})||^2\big)||\ub(t^{n+1})||_{H^2}^2+Re||\overline{\ub}^{n+1}||^2\right\} ||\nabla D\ub(t^{n+1})||^2 \nonumber \\
     &\quad +C k Re ||R_{\ub}^{n+1}||^2+\tau k |\eQ^{n+1}|_G^2 ||\nabla \overline{\ub}^{n+1}||^2+Ck ||\nabla \overline{\ub}^{n+1}||^2||D\ub(t^{n+1})||^2.
 \end{align}
 Taking summation of \eqref{err-4} from $n=1$ to $m\leq N-1$,  in light of the stability estimates of the numerical solution as well as the regularity assumption on the exact solution, one derives
 \begin{align}
     \label{err-5}
     &(||\eV^{m+1}||_G^2+|\eQ^{n+1}|_G^2)+\frac{k}{2Re}\sum_{n=1}^m||\nabla \eu^{n+1}||^2+k\frac{\gamma}{2}\sum_{n=1}^m|\eq^{n+1}|^2 \nonumber \\
     &\leq C(Re, \tau, \gamma)k\sum_{n=1}^m \left\{1+||\nabla  [2\eu^n-\eu^{n-1}]||^2\right\}||\eV^n||_G^2 +\tau k\sum_{n=2}^m||\nabla \overline{\ub}^{n}||^2 |\eQ^{n}|_G^2 +\tau k |\eQ^{m+1}|_G^2 ||\nabla \overline{\ub}^{m+1}||^2  \nonumber \\
     &\quad +C(Re, \gamma)k\sum_{n=1}^m ||\nabla D\ub(t^{n+1})||^2 +C\max_{n\geq 1}||D\ub(t^{n+1})||^2 \sum_{n=1}^m k||\nabla \overline{\ub}^{n+1}||^2+ Re \sum_{n=1}^m ||R_{\ub}^{n+1}||^2+||\eV^{1}||_G^2.
 \end{align}
Since $k ||\nabla \overline{\ub}^{m+1}||^2 \leq C$, one chooses the largest $\tau$ such that $\tau k ||\nabla \overline{\ub}^{m+1}||^2 \leq \frac{1}{2}$. The inequality \eqref{err-5} then becomes
\begin{align}
    \label{err-6}
    &(||\eV^{m+1}||_G^2+|\eQ^{n+1}|_G^2)+\frac{k}{Re}\sum_{n=1}^m||\nabla \eu^{n+1}||^2+k \gamma\sum_{n=1}^m|\eq^{n+1}|^2 \nonumber \\
     &\leq C(Re,  \gamma)k\sum_{n=1}^m \left\{1+||\nabla  [2\eu^n-\eu^{n-1}]||^2\right\}||\eV^n||_G^2 +\tau k\sum_{n=2}^m||\nabla \overline{\ub}^{n}||^2 |\eQ^{n}|_G^2 +Ck^4.
\end{align}
 An application of the discrete Gronwall's inequality implies
 \begin{align}
  \label{err-7}
     (||\eV^{m+1}||_G^2+|\eQ^{n+1}|_G^2)+\frac{k}{Re}\sum_{n=1}^m||\nabla \eu^{n+1}||^2+k \gamma\sum_{n=1}^m|\eq^{n+1}|^2 \leq C(Re,  \gamma)k^4.
 \end{align}

The error estimate for pressure is standard, cf. \cite{GiRa1986}.  Since the divergence operator is an isomorphism  from $\mathbf{V}^{\perp}$ onto $L^2(\Omega)/R$, there exist $\vb^{n+1} \in \mathbf{H}_0^1(\Omega)$ such that
\begin{align*}
    -\nabla \cdot \vb^{n+1}= \ep^{n+1}, \quad ||\nabla \vb^{n+1}|| \leq C ||\ep^{n+1}||_{L^2(\Omega)/R}.
\end{align*}
Then it follows from the error equations \eqref{DDSe-NSer} that
\begin{align}
    \label{err-p1}
    &||\ep^{n+1}||_{L^2(\Omega)/R}^2=\big(\nabla \ep^{n+1}, \vb^{n+1}\big) \nonumber \\
    &\leq ||\nabla \eu^{n+1}|| \cdot ||\nabla \vb^{n+1}||+ ||R_{\ub}^{n+1}||\cdot || \vb^{n+1}||+\Big|\Big([\ub(t^{n+1})-\overline{\ub}^{n+1}]\cdot \nabla\ub(t^{n+1}), \vb^{n+1}\Big)\Big|\nonumber \\
    &\quad +\Big|\Big(\overline{\ub}^{n+1}\cdot \nabla  [\ub(t^{n+1})-\overline{\ub}^{n+1}], \vb^{n+1}\Big)\Big|
     +\Big|\eq^{n+1}\Big( \overline{\ub}^{n+1}\cdot \nabla  \overline{\ub}^{n+1}, \vb^{n+1}\Big)\Big| \nonumber \\
     &\leq C \big(||\nabla \eu^{n+1}||+||R_{\ub}^{n+1}||\big)||\nabla \vb^{n+1}||+ \big(||\nabla D\ub(t^{n+1})||+||\nabla (2\eu^{n}-\eu^{n-1})||\big)||\nabla \ub(t^{n+1})||\cdot ||\nabla \vb^{n+1}|| \nonumber \\
     &+C||\overline{\ub}^{n+1}||^{1/2}||\nabla \overline{\ub}^{n+1}||^{1/2} ||\nabla \vb^{n+1}|| \big( ||D\ub(t^{n+1})||^{1/2}||\nabla D\ub(t^{n+1})||^{1/2}+||2\eu^{n}-\eu^{n-1}||^{1/2}||\nabla (2\eu^{n}-\eu^{n-1})||^{1/2}\big) \nonumber \\
     &+C|\eq^{n+1}| \cdot ||\overline{\ub}^{n+1}||\cdot ||\nabla \overline{\ub}^{n+1}|| \cdot ||\nabla \vb^{n+1}||,
\end{align}
where integration by parts and Ladyzhenskaya's inequality \eqref{Lady} have been utilized in the estimate of the  trilinear terms. It follows that
\begin{align}
    \label{err-p2}
    &||\ep^{n+1}||_{L^2(\Omega)/R} \leq C \big(||\nabla \eu^{n+1}||+||R_{\ub}^{n+1}||+||\nabla D\ub(t^{n+1})||+||\nabla (2\eu^{n}-\eu^{n-1})||\big)+C|\eq^{n+1}|\cdot ||\nabla \overline{\ub}^{n+1}|| \nonumber \\
    &+C||\nabla \overline{\ub}^{n+1}||  \big( ||D\ub(t^{n+1})||+||2\eu^{n}-\eu^{n-1}||\big)+||\nabla D\ub(t^{n+1})||+||\nabla (2\eu^{n}-\eu^{n-1})||. 
\end{align}
Therefore
\begin{align}
    \label{err-p3}
    &k\sum_{n=1}^m||\ep^{n+1}||^2_{L^2(\Omega)/R} \nonumber \\
    &\leq C k\sum_{n=1}^m\big(||\nabla \eu^{n+1}||^2+||R_{\ub}^{n+1}||^2+||\nabla D\ub(t^{n+1})||^2+||\nabla (2\eu^{n}-\eu^{n-1})||^2\big)+C\max_{n\geq 1} |\eq^{n+1}|^2 \sum_{n=1}^m k||\nabla \overline{\ub}^{n+1}||^2 \nonumber \\
    &+C\sum_{n=1}^m k||\nabla \overline{\ub}^{n+1}||^2 \max_{n\geq 1} \big( ||D\ub(t^{n+1})||^2+||2\eu^{n}-\eu^{n-1}||^2\big)+\sum_{n=1}^m k\big(||\nabla D\ub(t^{n+1})||^2+||\nabla (2\eu^{n}-\eu^{n-1})||^2\big)\nonumber \\
    &\leq C k^4.
\end{align}
This completes the proof.
 \end{proof}
	
	\section{Numerical experiments}\label{s-nu}
	Numerical tests are performed to demonstrate the accuracy, efficiency and robustness of the BDF2-SAV schemes. In all tests, the scheme is initiated by a first order scheme with explicit discretization of the nonlinear term.

	\subsection{ Accuracy} 
 We perform three tests for verification of accuracy using manufactured solutions.  In the first two tests, finite element is used for spatial discretization for the SAV-BDF2 schemes in the primitive form and the streamfunction-vorticiity form, respectively.  The implementation is carried out using the open-source package FreeFem++. In the first two examples, the computational domain is  $\Omega=[0,1]^2$, the final time is set at $T=100$, $h$ and $k$ are simultaneously refined through the relation $h=\frac{1}{2}k$, so that the error is dominated by the temporal error.

 For the SAV-BDF2 algorithm in the primitive form,   The test problem is manufactured according to the exact solution
	\begin{align*}
		&u_1= 2x^2y\sin(t) (x - 1)^2(y - 1)^2 + x^2 y^2 \sin(t) (2y - 2)(x - 1)^2, \\
		&u_2=- 2xy^2\sin(t)(x - 1)^2(y - 1)^2 - x^2y^2\sin(t)(2x - 2)(y - 1)^2,\\
		&p=cos(t) xy.
	\end{align*}
 The parameters are $Re=100$, $\gamma=1000$. The Taylor-Hood $P2-P1$ finite element pair is utilized for spatial discretization.
 The relative error  for    $\ub$ and $p$  in the $L^2$ norm, and relative error for $q$ are displayed in the Table \ref{T_Conv}.

	\begin{table}[hbt]
	\begin{center}
		\caption{Relative error   for    $\ub$ and $p$  in the $L^2$ norm, and  error for $q$  with $k=2 h$ and $P2-P1$ Taylor-Hood finite element.} \label{T_Conv}
		\begin{tabular}{cp{2cm}<{\centering}p{1cm}<{\centering}p{2cm}<{\centering}p{1cm}<{\centering}p{2cm}<{\centering}p{1cm}<{\centering}p{2cm}<{\centering}p{1cm}<{\centering}p{2cm}}
			\hline  $k$  &  $\|e_{u}\|$    &order  &      $\|e_p\|$    &order    &$|q-1|$ \\[0.5ex]\hline
			0.5    &0.0801977   &       &0.0121033   &                                &2.47015e-09\\
			0.25    &0.0108756   &2.88        &0.00302578   &2.00   &5.91561e-11\\
			0.125   &0.00151803   &2.84        &0.000756443  &2.00   &6.28753e-12\\
			0.0625  &0.000242224   &2.65       &0.000189111   &2.00   &1.74971e-13\\
			0.03125   &4.51685e-05   &2.42     &4.72777e-05  &2.00  & 2.22045e-16\\
			0.015625  &  9.5513e-06   &   2.24        & 1.18194e-05  &  2.00        &6.66134e-16                             \\
			0.0078125 & 2.17576e-06 &    2.13       &2.95486e-06    & 2.00     &1.11022e-16
\\			\hline
		\end{tabular}
	\end{center}
\end{table}

For the SAV-BDF2 algorithm in the streamfunction-vorticity formulation,   the true solutions are 
	\begin{align*}
		&\omega= \sin(t) \sin(\pi x) \sin(2\pi y), \\
		&\psi=-10\cos(t)x(x-1)y(y-1),
	\end{align*}
 in which $Re=10$, $\gamma=1000$. $P2$ finite elements are utilized for both variables.
 The relative error  for    $\omega$ and $\psi$  in the $L^2$ norm, and relative error for $q$ are displayed in the Table \ref{T_Conv2}. 
	\begin{table}[hbt]
	\begin{center}
		\caption{Relative error   for    $\omega$ and $\psi$  in the $L^2$ norm, and  error for $q$  with $k=2 h$ and $P2-P2$ finite element pair} \label{T_Conv2}
		\begin{tabular}{cp{2cm}<{\centering}p{1cm}<{\centering}p{2cm}<{\centering}p{1cm}<{\centering}p{2cm}<{\centering}p{1cm}<{\centering}p{2cm}<{\centering}p{1cm}<{\centering}p{2cm}}
			\hline  $k$  &  $\|e_{\omega}\|$    &order  &      $\|e_\psi\|$    &order    &$|q-1|$ \\[0.5ex]\hline
			0.5    &0.501288   &       &0.0110108   &                                &2.47015e-09\\
			0.25    & 0.098924  &2.34        &0.00196049   &2.49   &0.000131017\\
			0.125   & 0.022645  &2.12       &0.000417379  &2.23   &6.81064e-06\\
			0.0625  & 0.00560761  &2.01       &0.000100779   &2.05   &3.72619e-07\\
			0.03125   & 0.00139695  &2.00     &2.49413e-05  &2.01  & 1.0624e-09\\
			0.015625  & 0.000348469  &   2.00        & 6.21157e-06  &  2.01        &6.27696e-11                             \\
			0.0078125 & 8.70057e-05 &    2.00       &1.55032e-06    & 2.00     &3.80607e-12
\\			\hline
		\end{tabular}
	\end{center}
\end{table}	

Since periodic boundary conditions will be used in the following for simulations of coherent structures, the SAV-BDF2 scheme in the stream function formulation is also implemented in Matlab utilizing Fourier collocation methods. The exact solutions are 
\begin{align*}
    &\omega=\sin(t) \sin(2\pi x) \sin(2\pi y), \quad \psi=\cos(t) \cos(2\pi x) \cos(2\pi y).
\end{align*}
The parameters are $Re=10, \gamma=1000, T=100, \Omega=(0, 1)^2$. $32$ Fourier modes are employed. The relative error in $l^\infty$ norm and convergence order is displayed in Table \ref{stream_conv}.

	\begin{table}[hbt]
	\begin{center}
		\caption{Relative error   for    $\omega$ and $\psi$  in the $l^\infty$ norm, and  error for $q$  with  $32$ Fourier modes in each direction} \label{stream_conv}
		\begin{tabular}{cp{2cm}<{\centering}p{1cm}<{\centering}p{2cm}<{\centering}p{1cm}<{\centering}p{2cm}<{\centering}p{1cm}<{\centering}p{2cm}<{\centering}p{1cm}<{\centering}p{2cm}}
			\hline  $k$  &  $\|e_{\omega}\|_{\infty}$    &order  &      $\|e_\psi\|_{\infty}$    &order    &$|q-1|$ \\[0.5ex]\hline
			0.05    &1.36619   &       &1.818292e-03   &                                &6.096786e-03\\
			0.025    & 1.025531e-03  &10.38        &8.159268e-06   &7.80  &3.004133e-09\\
			0.0125   & 2.553669e-04  &2.00      &2.031416e-06  &2.00  &6.844236e-11\\
			0.00625  & 6.363869e-05  &2.00       &5.064025e-07   &2.00   &4.260259e-12\\
			0.003125   & 1.588605e-05  &2.00     &1.264143e-07  &2.00  & 2.654543e-13\\
			0.0015625  & 3.968483e-06  &   2.00        & 3.157964e-08  &  2.00        &1.643130e-14                             \\
			0.00078125 & 9.917372e-07 &    2.00       &7.891879e-09    & 2.00     &1.110223e-15
\\			\hline
		\end{tabular}
	\end{center}
\end{table}

\subsection{The Kolmogorov flow}	
As a Benchmark test, one considers the 2D NSE with the Kolmogorov forcing and periodic boundary conditions in the domain $\Omega=(0, 2\pi)^2$, as is detailed in \cite{Armbruster1996}. A typical Kolmogorov forcing takes the form
\begin{align}\label{Kol}
    \mathbf{f}:=\Big[\frac{m^3}{Re} \cos(my), 0\Big]^T, \quad m \in \mathbb{N}^+.
\end{align}
The steady-state solution, also known as the basic Kolmogorov flow, is given in a streamfunction formulation as $\psi=\sin(my)$.  The steady-state solution is stable when $Re$ is relatively small. It becomes unstable as $Re$ crosses a critical value and coherent structure develops. In the following simulations, the SAV-BDF2 scheme is implemented in Matlab in conjunction with the Fourier collocation method using $256$ modes.

\subsubsection{Long-time stability}
One verifies the long-time stability of the SAV-BDF2 scheme for the 2D NSE  in the stream function formulation with the  Kolmogorov forcing \eqref{Kol} and $m=2, Re=100$. The initial stream function is a (periodic) perturbation of the steady-state solution
\begin{align*}
    \psi(0, x, y)=\sin(2y)+0.001\sin(2 x)\sin(2 y).
\end{align*}
The evolution of the $L^2$ norm and $H^1$ norm of the vorticity by the SAV-BDF2 scheme for $T=1000$ is depicted in Fig. \ref{Sta_plot}  with 256 Fourier modes and $k=0.01, 0.005, 0.0025$ respectively. It is noted that the BDF2 IMEX scheme with explicit discretization of the advection term blows up for $k=0.003$.

\begin{figure}[!ht]
		\subfigure
		{
			\centering
			\begin{minipage}[t]{1.0\linewidth}
				\includegraphics[width=3.4in]{  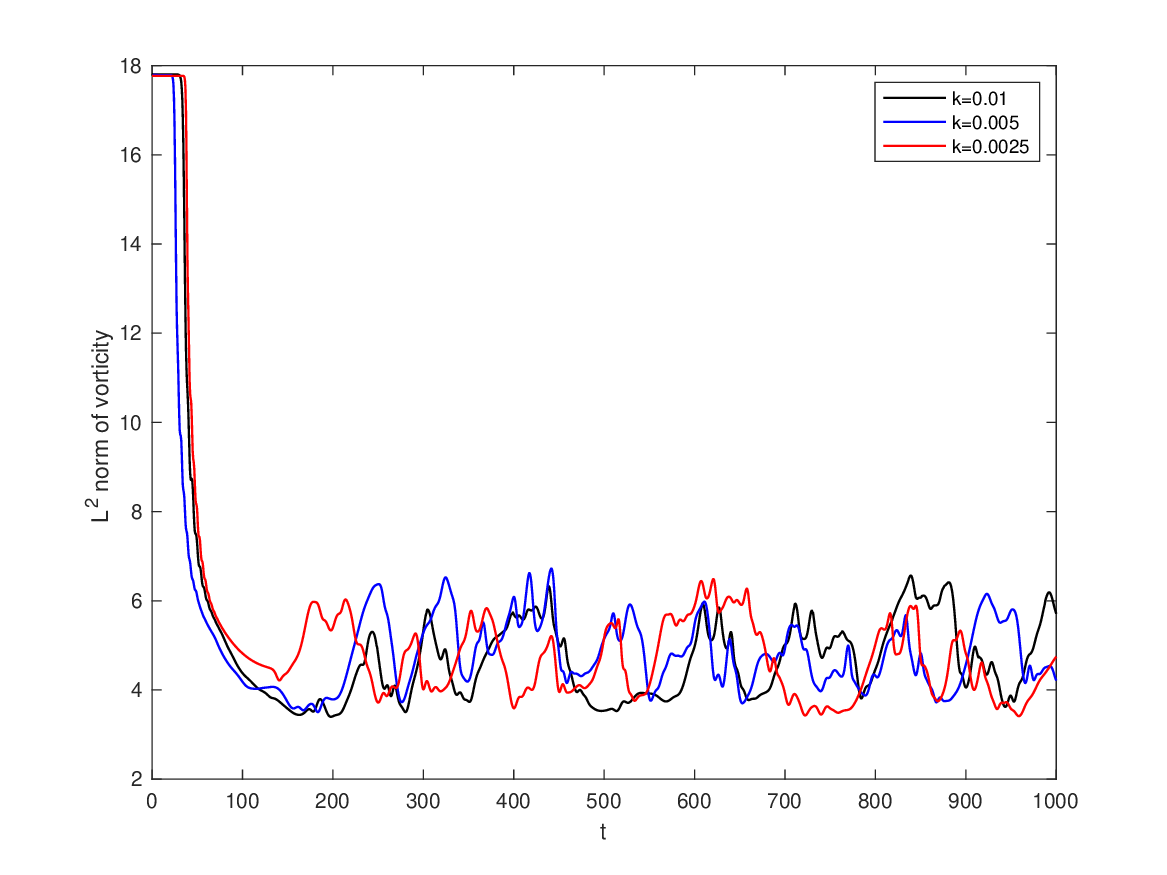}
				\includegraphics[width=3.4in]{  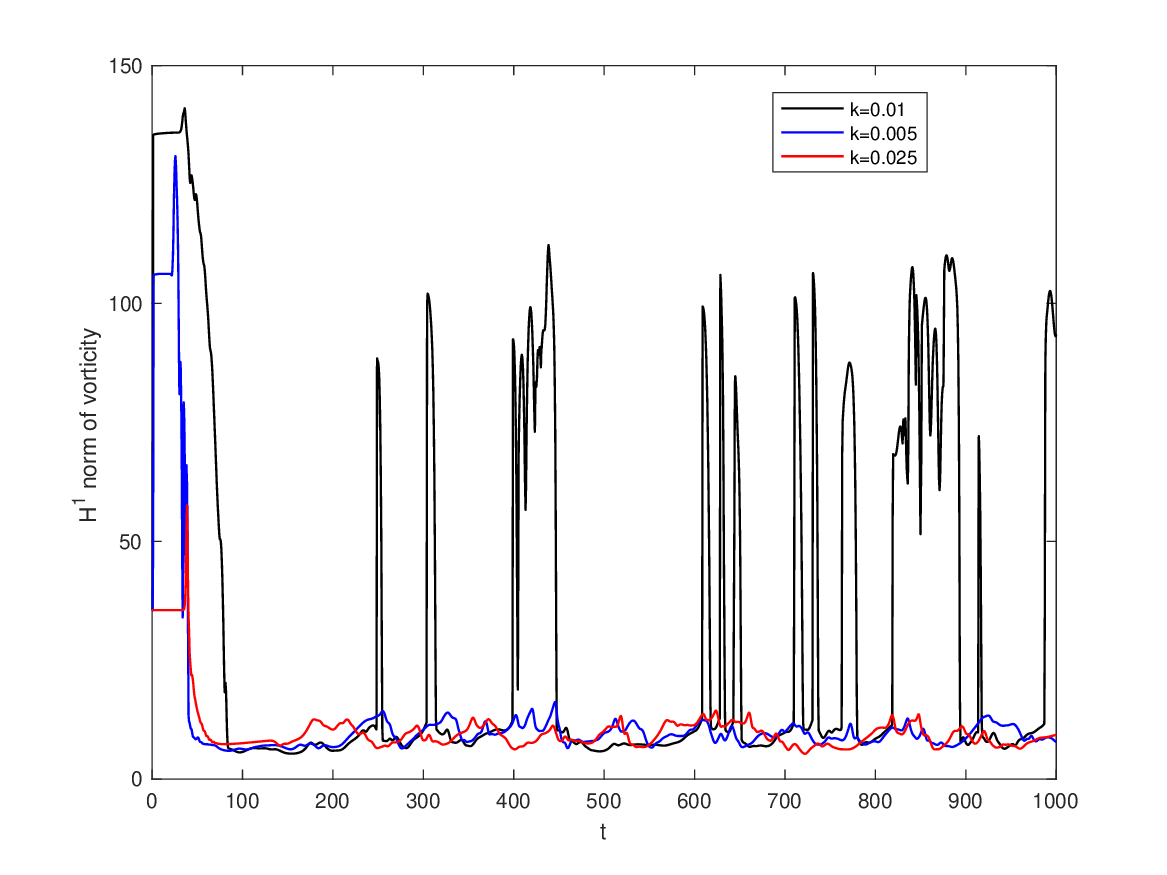}
			\end{minipage}
		}
		\caption{The $L^2$ norm and $H^1$ norm of the vorticity as a function of time by the SAV-BDF2 scheme with 256 Fourier modes and $k=0.01, 0.005, 0.0025$ respectively.}
		\label{Sta_plot}
	\end{figure}

\subsubsection{Bursting}
The initial condition used in this simulation is a perturbation to the basic Kolmogorov flow
\begin{align*}
    \psi(0, x, y)=\sin(2y)+0.001\sin(2\pi x)\sin(2\pi y).
\end{align*}
It is discovered through numerical simulation in \cite{Armbruster1996} that at $Re=25.70$ the solution is quasi-periodic which consists of a traveling structure plus additional time dependent behavior, and  bursts occur intermittently at $Re \geq 25.77$. In the following simulation, the Reynolds number is taken to be $25.7715$, $k=0.001$, and the final time is $T=10000$. Fourier collocation method is utilized for spatial discretization with $256$ Fourier modes.

The real and imaginary part of the Fourier coefficient of mode $e^{iy}$ is plotted in Fig. \ref{mode} as a function of time. Bursts occur intermittently characterized by a sudden dramatic change of magnitude of the Fourier coefficient. The evolution of the maximum vorticity and the $L^2$ norm of the gradient is displayed in Fig. \ref{L2}, which further corroborates the appearance of bursts. The dynamics undergo a long laminar regime, then a chaotic explosion (burst) ensues, followed by another long period of laminar regime. The pattern repeats in time. It is noted in \cite{Armbruster1996} that the occurrences of bursts do not correlate with each other. Fig. \ref{bar} confirms that the time interval between bursts is not constant and appears to be random. The power spectrum density of the fluctuation of the maximum vorticity is shown in Fig. \ref{psd}. The wide spread of frequencies indicate non-periodic motion, while the concentration of power at low frequencies suggests the intermittency of bursting phenomenon.
\begin{figure}[!ht]
	\centering
	\subfigure{
		\includegraphics[width=5in]{  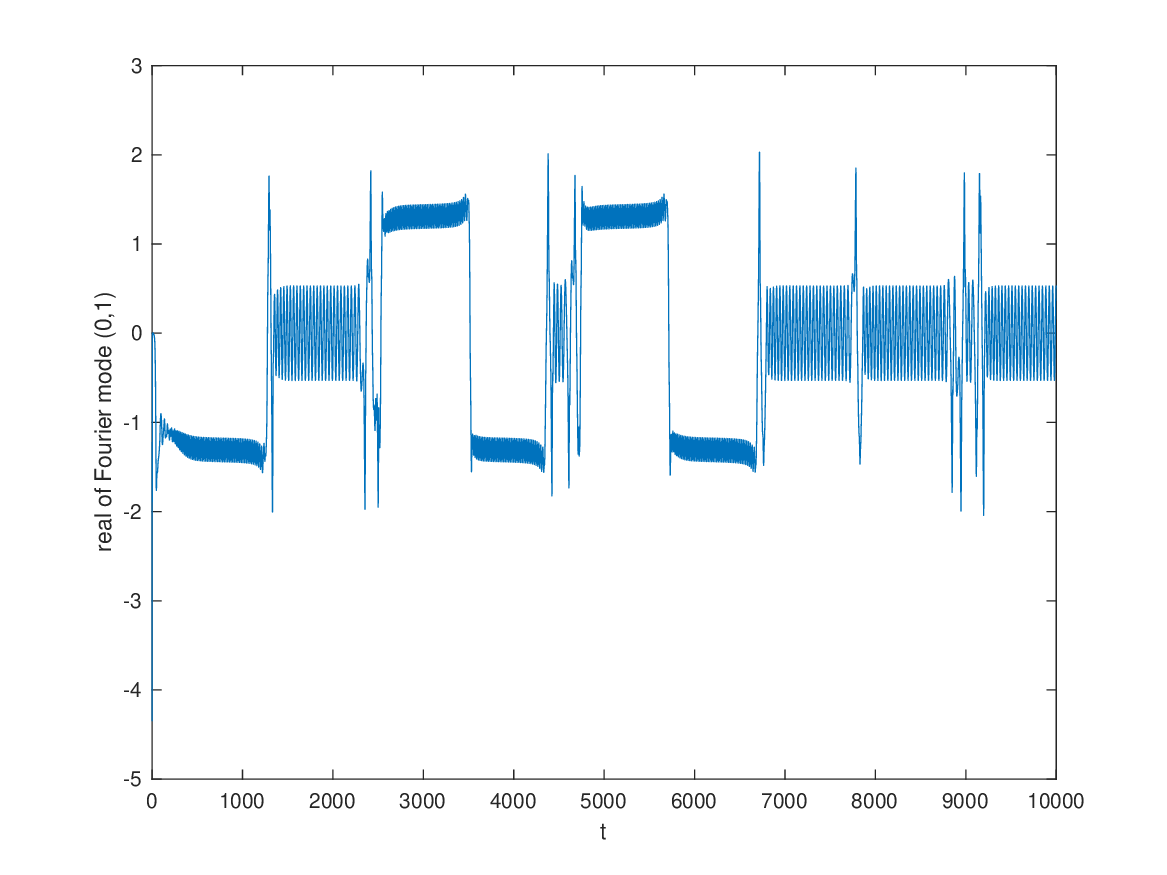}
	}
\hspace{0.5cm}
	\subfigure{
		\includegraphics[width=5in]{  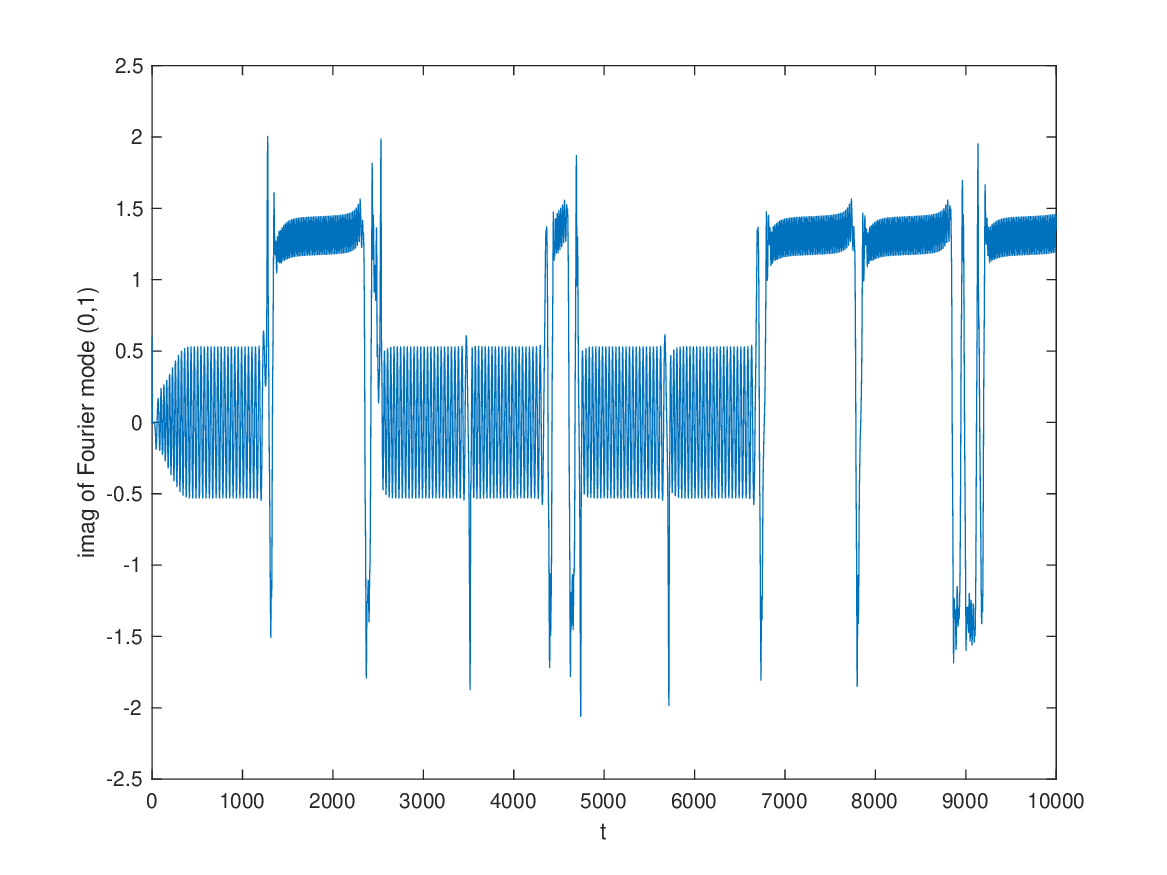}
	}
	\caption{The real and imaginary part of the Fourier coefficient of mode $e^{iy}$ as a function of time. $Re=25.7715, k=0.001$ with 256 Fourier modes.}
	\label{mode}
\end{figure}

\begin{figure}[!ht]
	\centering
	\subfigure{
		\includegraphics[width=5in]{  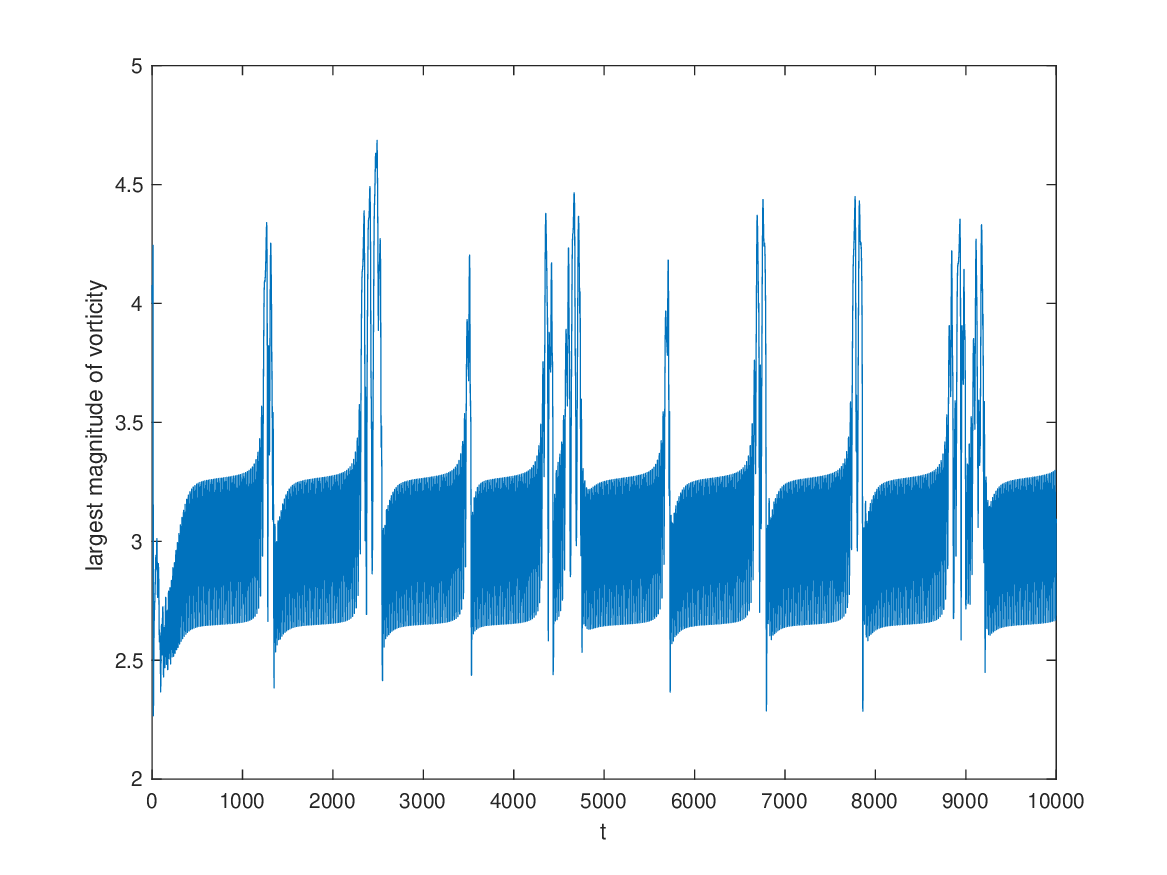}
	}
\hspace{0.5cm}
	\subfigure{
		\includegraphics[width=5in]{  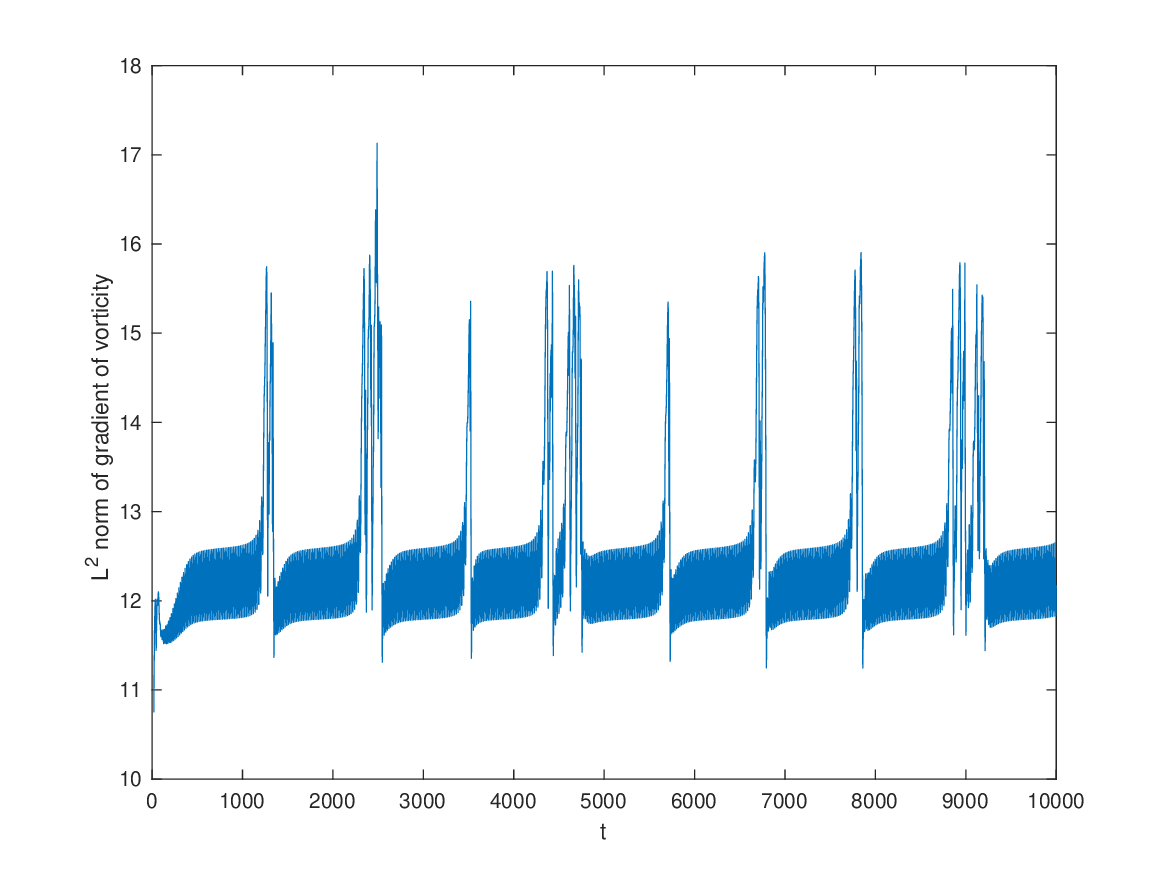}
	}
	\caption{The largest magnitude and the $L^2$ norm of  gradient of vorticity as a function of time. $Re=25.7715, k=0.001$ with 256 Fourier modes.}
	\label{L2}
\end{figure}

\begin{figure}[!ht]
	\centering	
		\includegraphics[width=5in]{  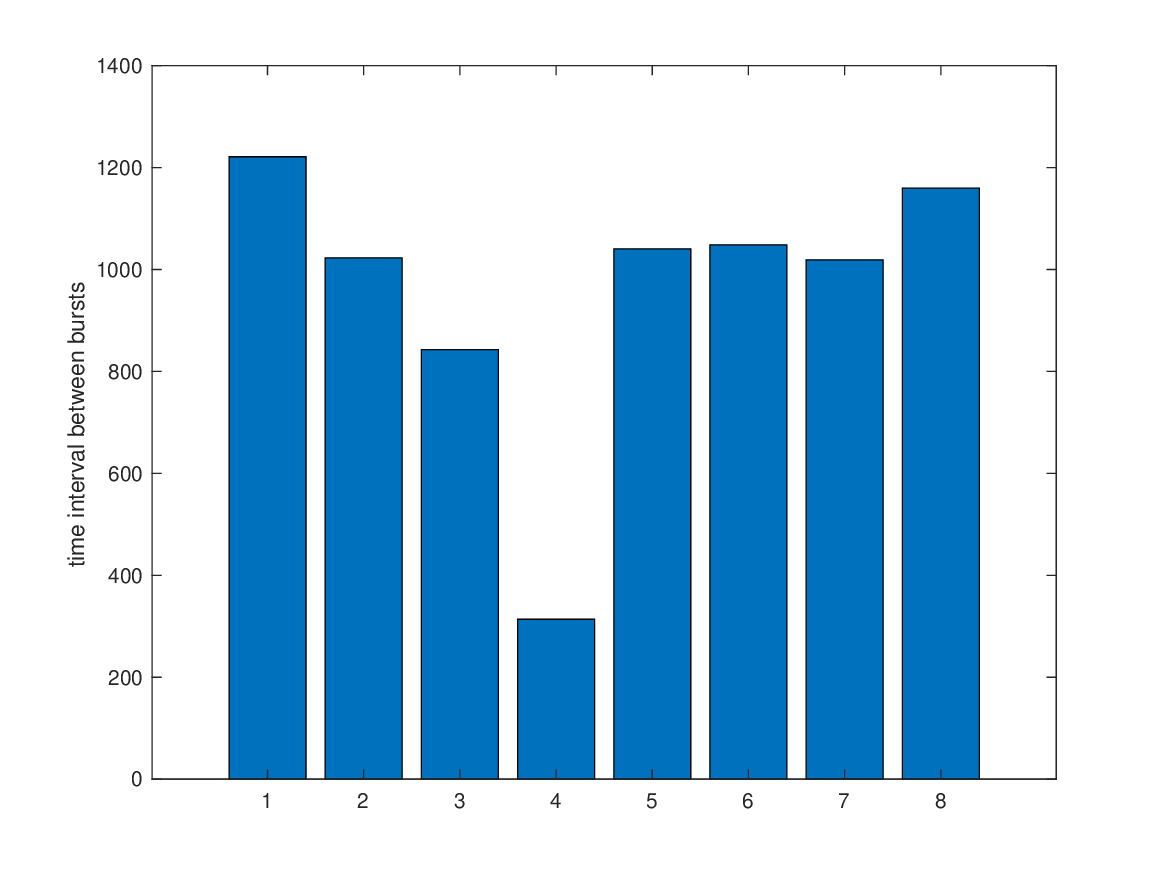}
	\caption{Time interval between bursts}
	\label{bar}
\end{figure}

\begin{figure}[!ht]
	\centering	
		\includegraphics[width=5in]{  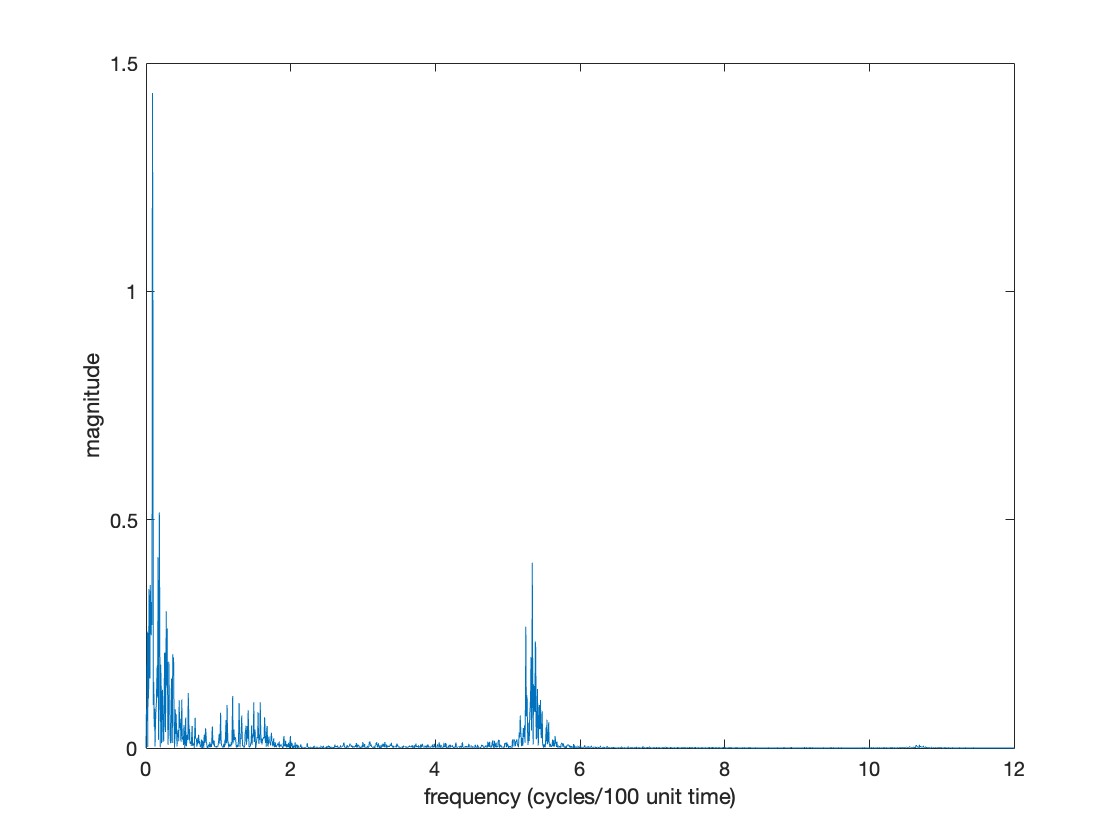}
	\caption{Power spectrum density plot}
	\label{psd}
\end{figure}

A zoomed-in plot of the maximum vorticity across the bursting event between $t=1000$ and $2000$ is displayed in Fig. \ref{par_mag}. Fig. \ref{bursting} demonstrates the typical  dynamics during  during a bursting event. It is observed that the burst is associated with  spatially localized concentration of vorticity. 
\begin{figure}[!ht]
	\centering	
		\includegraphics[width=5in]{  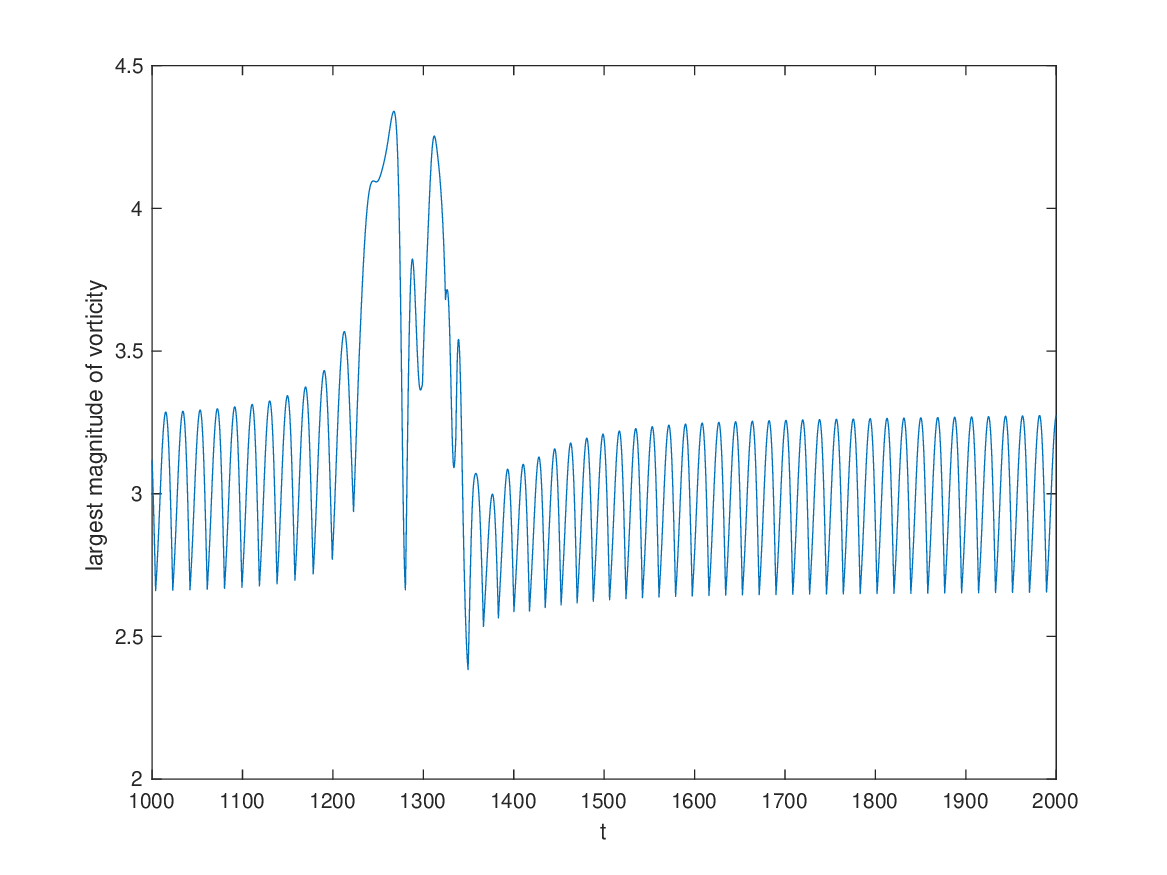}
	\caption{Evolution of the maximum magnitude of vorticity during a bursting event.}
	\label{par_mag}
\end{figure}

\begin{figure}[!ht]
			\centering
		\subfigure[ snapshots at $t=1220,1236, 1250$.]{
			\centering
			\begin{minipage}[t]{1.0\linewidth}
				\includegraphics[width=2in]{  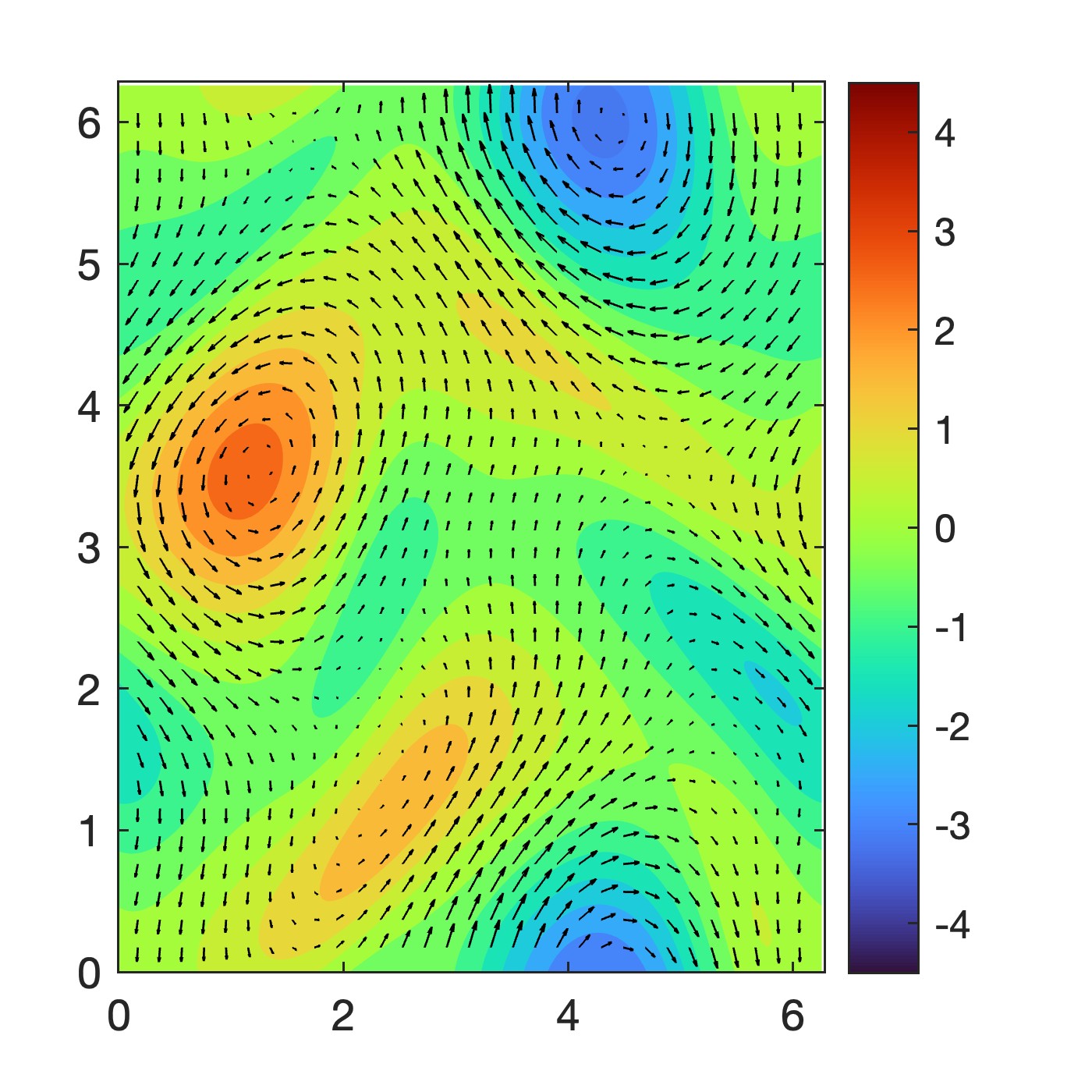}\hskip -2mm
				\includegraphics[width=2in]{  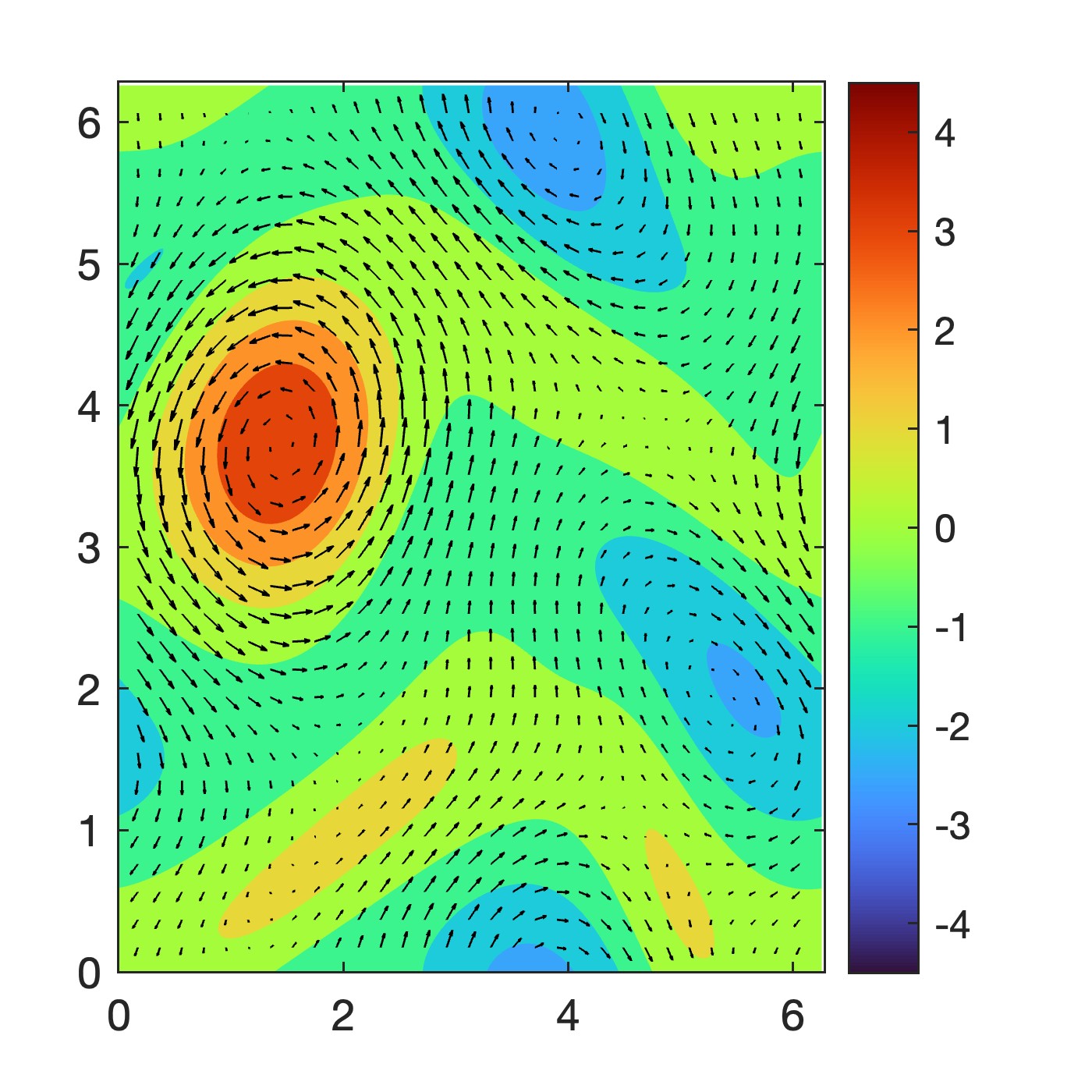}\hskip -2mm
				\includegraphics[width=2in]{  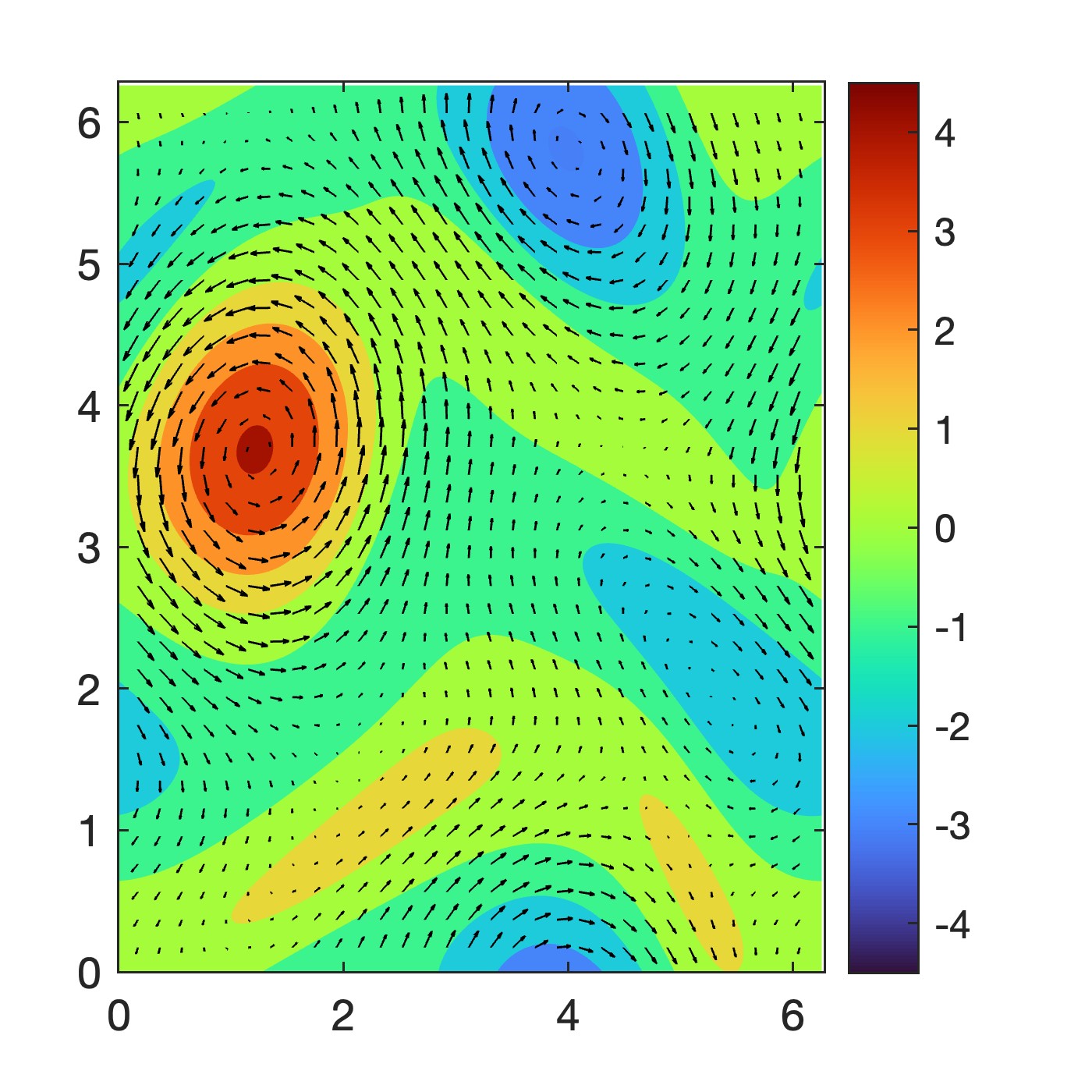}\hskip -2mm
			\end{minipage}\label{figPO:subfig:a}
		}
		\subfigure[ snapshots at $t=1260, 1275, 1400$.]{ 
			\centering
			\begin{minipage}[t]{1.0\linewidth}
              \includegraphics[width=2in]{  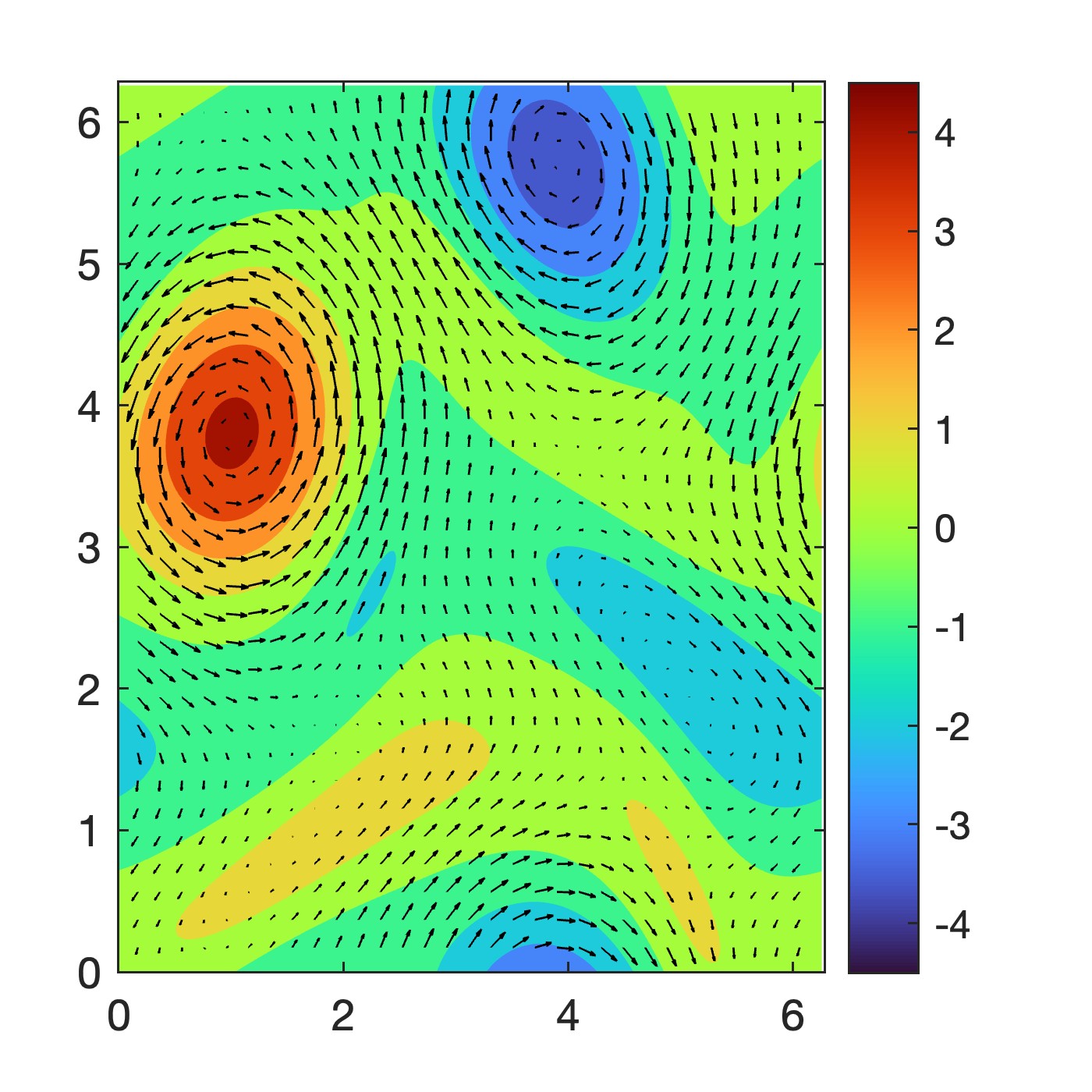} \hskip -2mm
				\includegraphics[width=2in]{  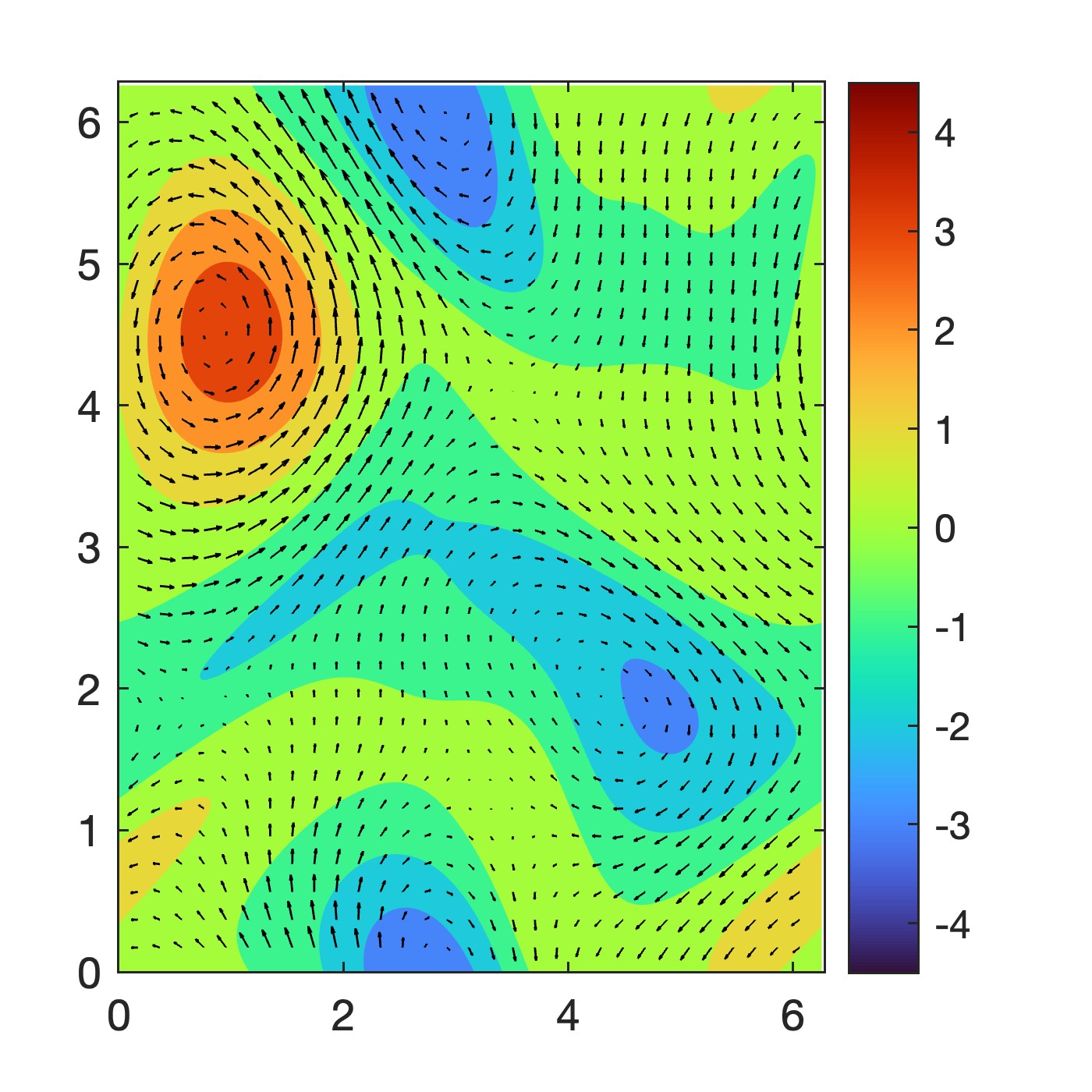} \hskip -2mm
               \includegraphics[width=2in]{  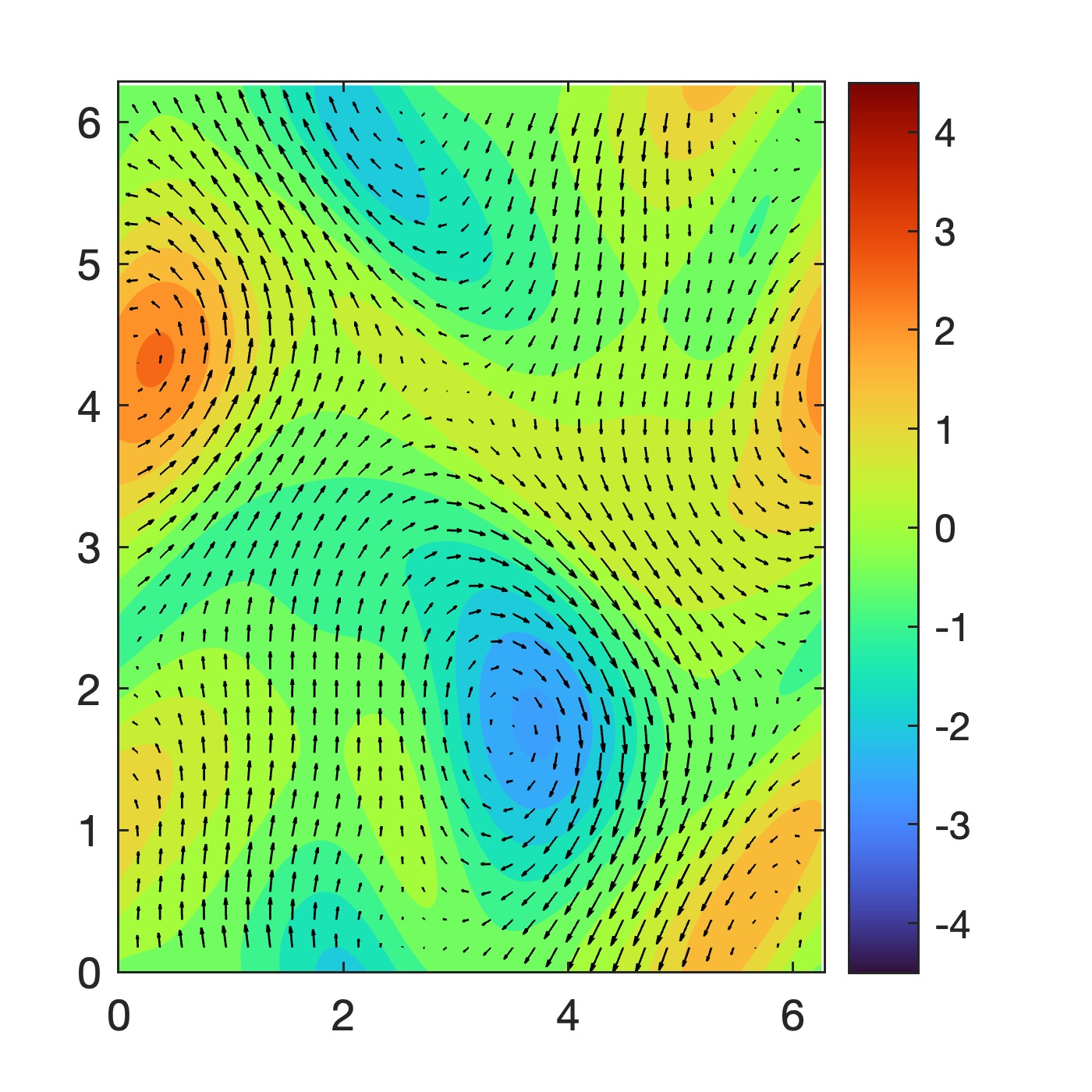}\hskip -2mm
			\end{minipage}\label{figPO:subfig:b}
		}
		\caption{The vorticity contour and velocity field during a bursting event}
		\label{bursting}
	\end{figure}

	\section{Conclusion} \label{conc}
	 A novel second-order accurate, Forced Scalar Auxiliary Variable approach (FSAV) is introduced to preserve the underlying dissipative structure of the forced Navier-Stokes system that yields a uniform-in-time estimate of the numerical solution. As an example we apply the new algorithm to the two-dimensional incompressible Navier-Stokes equations. In the case with no-penetration and free-slip boundary condition on a simply connected domain, we are also able to derive a uniform-in-time estimate of the vorticity in $H^1$ norm in addition to the $L^2$ norm guaranteed by the general framework. Numerical results demonstrate superior performance of the new algorithm in terms of accuracy, efficiency, stability and robustness. The FSAV method is applicable to a general class of forced dissipative systems with conservative nonlinear term. In addition, the numerical scheme is autonomous if the underlying model is, laying the foundation for studying long-time dynamics of the numerical solution via dynamical system approach.

	\section*{Acknowledgement}
	The work of D. Han is supported by the National Science Foundation grant DMS-2310340. The work of X. Wang is supported by the National Natural Science Foundation of China grant 12271237 as well as the Gary Havener Endowment. The authors share joint first authorship.

	
	\bibliographystyle{siam}
	\bibliography{multiphase-2024.bib}

\def\cprime{$'$}
\begin{thebibliography}{10}

\bibitem{ALL2019}
{\sc G.~Akrivis, B.~Li, and D.~Li}, {\em Energy-decaying extrapolated
  {RK}-{SAV} methods for the {A}llen-{C}ahn and {C}ahn-{H}illiard equations},
  SIAM J. Sci. Comput., 41 (2019), pp.~A3703--A3727.

\bibitem{Armbruster1996}
{\sc D.~Armbruster, B.~Nicolaenko, N.~Smaoui, and P.~Chossat}, {\em Symmetries
  and dynamics for {$2$}-{D} {N}avier-{S}tokes flow}, Phys. D, 95 (1996),
  pp.~81--93.

\bibitem{CHJ2023}
{\sc J.~Carter, D.~Han, and N.~Jiang}, {\em Second order, unconditionally
  stable, linear ensemble algorithms for the magnetohydrodynamics equations},
  J. Sci. Comput., 94 (2023), pp.~Paper No. 41, 29.

\bibitem{FMRT2001}
{\sc C.~Foias, O.~Manley, R.~Rosa, and R.~Temam}, {\em Navier-{S}tokes
  equations and turbulence}, vol.~83 of Encyclopedia of Mathematics and its
  Applications, Cambridge University Press, Cambridge, 2001.

\bibitem{Frisch1995}
{\sc U.~Frisch}, {\em Turbulence}, Cambridge University Press, Cambridge, 1995.
\newblock The legacy of A. N. Kolmogorov.

\bibitem{GiRa1986}
{\sc V.~Girault and P.-A. Raviart}, {\em Finite element methods for
  {N}avier-{S}tokes equations}, vol.~5 of Springer Series in Computational
  Mathematics, Springer-Verlag, Berlin, 1986.
\newblock Theory and algorithms.

\bibitem{GZW2020}
{\sc Y.~Gong, J.~Zhao, and Q.~Wang}, {\em Arbitrarily high-order linear energy
  stable schemes for gradient flow models}, J. Comput. Phys., 419 (2020),
  pp.~109610, 20.

\bibitem{GTWW2012}
{\sc S.~Gottlieb, F.~Tone, C.~Wang, X.~Wang, and D.~Wirosoetisno}, {\em Long
  time stability of a classical efficient scheme for two-dimensional
  {N}avier-{S}tokes equations}, SIAM J. Numer. Anal., 50 (2012), pp.~126--150.

\bibitem{GuTi2013}
{\sc F.~Guill{\'e}n-Gonz{\'a}lez and G.~Tierra}, {\em On linear schemes for a
  {C}ahn-{H}illiard diffuse interface model}, J. Comput. Phys., 234 (2013),
  pp.~140--171.

\bibitem{HOR2017}
{\sc T.~Heister, M.~A. Olshanskii, and L.~G. Rebholz}, {\em Unconditional
  long-time stability of a velocity-vorticity method for the 2{D}
  {N}avier-{S}tokes equations}, Numer. Math., 135 (2017), pp.~143--167.

\bibitem{HeRa1990}
{\sc J.~G. Heywood and R.~Rannacher}, {\em Finite-element approximation of the
  nonstationary {N}avier-{S}tokes problem. {IV}. {E}rror analysis for
  second-order time discretization}, SIAM J. Numer. Anal., 27 (1990),
  pp.~353--384.

\bibitem{HiSu2000}
{\sc A.~T. Hill and E.~S{\"u}li}, {\em Approximation of the global attractor
  for the incompressible {N}avier-{S}tokes equations}, IMA J. Numer. Anal., 20
  (2000), pp.~633--667.

\bibitem{HuSh2021}
{\sc F.~Huang and J.~Shen}, {\em Stability and error analysis of a class of
  high-order {IMEX} schemes for {N}avier-{S}tokes equations with periodic
  boundary conditions}, SIAM J. Numer. Anal., 59 (2021), pp.~2926--2954.

\bibitem{HuSh2022}
\leavevmode\vrule height 2pt depth -1.6pt width 23pt, {\em A new class of
  implicit-explicit {BDF{$k$}} {SAV} schemes for general dissipative systems
  and their error analysis}, Comput. Methods Appl. Mech. Engrg., 392 (2022),
  pp.~Paper No. 114718, 25.

\bibitem{JiYa2021}
{\sc N.~Jiang and H.~Yang}, {\em Stabilized scalar auxiliary variable ensemble
  algorithms for parameterized flow problems}, SIAM J. Sci. Comput., 43 (2021),
  pp.~A2869--A2896.

\bibitem{Ladyzhenskaya1969}
{\sc O.~A. Ladyzhenskaya}, {\em The mathematical theory of viscous
  incompressible flow}, Second English edition, revised and enlarged.
  Translated from the Russian by Richard A. Silverman and John Chu. Mathematics
  and its Applications, Vol. 2, Gordon and Breach Science Publishers, New York,
  1969.

\bibitem{LiSh2020}
{\sc X.~Li and J.~Shen}, {\em Error analysis of the {SAV}-{MAC} scheme for the
  {N}avier-{S}tokes equations}, SIAM J. Numer. Anal., 58 (2020),
  pp.~2465--2491.

\bibitem{LSL2021}
{\sc X.~Li, J.~Shen, and Z.~Liu}, {\em New {SAV}-pressure correction methods
  for the {N}avier-{S}tokes equations: stability and error analysis}, Math.
  Comp., 91 (2021), pp.~141--167.

\bibitem{LYD2019}
{\sc L.~Lin, Z.~Yang, and S.~Dong}, {\em Numerical approximation of
  incompressible {N}avier-{S}tokes equations based on an auxiliary energy
  variable}, J. Comput. Phys., 388 (2019), pp.~1--22.

\bibitem{MaWa2006a}
{\sc A.~J. Majda and X.~Wang}, {\em Non-linear dynamics and statistical
  theories for basic geophysical flows}, Cambridge University Press, Cambridge,
  2006.

\bibitem{MoYa2007}
{\sc A.~S. Monin and A.~M. Yaglom}, {\em Statistical fluid mechanics: mechanics
  of turbulence. {V}ol. {I}}, Dover Publications, Inc., Mineola, NY,
  english~ed., 2007.
\newblock Translated from the 1965 Russian original, Edited and with a preface
  by John L. Lumley, Reprinted from the 1971 edition.

\bibitem{ReTo2023}
{\sc L.~Rebholz and F.~Tone}, {\em Long-time {$H^1$}-stability of {BDF}2 time
  stepping for 2{D} {N}avier-{S}tokes equations}, Appl. Math. Lett., 141
  (2023), pp.~Paper No. 108624, 8.

\bibitem{SXY2018}
{\sc J.~Shen, J.~Xu, and J.~Yang}, {\em The scalar auxiliary variable ({SAV})
  approach for gradient flows}, J. Comput. Phys., 353 (2018), pp.~407--416.

\bibitem{SXY2019}
\leavevmode\vrule height 2pt depth -1.6pt width 23pt, {\em A new class of
  efficient and robust energy stable schemes for gradient flows}, SIAM Rev., 61
  (2019), pp.~474--506.

\bibitem{SiAr1994}
{\sc J.~C. Simo and F.~Armero}, {\em Unconditional stability and long-term
  behavior of transient algorithms for the incompressible {N}avier-{S}tokes and
  {E}uler equations}, Comput. Methods Appl. Mech. Engrg., 111 (1994),
  pp.~111--154.

\bibitem{Temam1997}
{\sc R.~Temam}, {\em Infinite-dimensional dynamical systems in mechanics and
  physics}, vol.~68 of Applied Mathematical Sciences, Springer-Verlag, New
  York, second~ed., 1997.

\bibitem{Tone2007}
{\sc F.~Tone}, {\em On the long-time stability of the {C}rank-{N}icolson scheme
  for the 2{D} {N}avier-{S}tokes equations}, Numer. Methods Partial
  Differential Equations, 23 (2007), pp.~1235--1248.

\bibitem{ToWi2006}
{\sc F.~Tone and D.~Wirosoetisno}, {\em On the long-time stability of the
  implicit {E}uler scheme for the two-dimensional {N}avier-{S}tokes equations},
  SIAM J. Numer. Anal., 44 (2006), pp.~29--40.

\bibitem{Wang2010a}
{\sc X.~Wang}, {\em Approximation of stationary statistical properties of
  dissipative dynamical systems: time discretization}, Math. Comp., 79 (2010),
  pp.~259--280.

\bibitem{Wang2012}
\leavevmode\vrule height 2pt depth -1.6pt width 23pt, {\em An efficient second
  order in time scheme for approximating long time statistical properties of
  the two dimensional {N}avier--{S}tokes equations}, Numer. Math., 121 (2012),
  pp.~753--779.

\bibitem{WaYu2024}
{\sc X.~Wang and Y.~Yu}, {\em A note on the stability of two families of
  two-step schemes}, arXiv 2406.18398,  (2024).

\bibitem{WHS2022}
{\sc K.~Wu, F.~Huang, and J.~Shen}, {\em A new class of higher-order decoupled
  schemes for the incompressible {N}avier-{S}tokes equations and applications
  to rotating dynamics}, J. Comput. Phys., 458 (2022), pp.~Paper No. 111097,
  16.

\bibitem{Yang2021}
{\sc X.~Yang}, {\em A novel fully-decoupled, second-order and energy stable
  numerical scheme of the conserved {A}llen-{C}ahn type flow-coupled binary
  surfactant model}, Comput. Methods Appl. Mech. Engrg., 373 (2021), p.~113502.

\bibitem{YaHa2017}
{\sc X.~Yang and D.~Han}, {\em Linearly first- and second-order,
  unconditionally energy stable schemes for the phase field crystal equation},
  J. Comput. Phys., 330 (2017), pp.~1116--1134.

\bibitem{YaJu2017}
{\sc X.~Yang and L.~Ju}, {\em Linear and unconditionally energy stable schemes
  for the binary fluid-surfactant phase field model}, Comput. Methods Appl.
  Mech. Engrg., 318 (2017), pp.~1005--1029.

\bibitem{YZW2017}
{\sc X.~Yang, J.~Zhao, and Q.~Wang}, {\em Numerical approximations for the
  molecular beam epitaxial growth model based on the invariant energy
  quadratization method}, J. Comput. Phys., 333 (2017), pp.~104--127.

\end{thebibliography}
	
\end{document}